\documentclass[11pt]{article}
\usepackage[T1]{fontenc}
\usepackage[utf8]{inputenc}
\usepackage{lmodern}
\usepackage{amsmath,amssymb,amsthm,mathrsfs}
\usepackage{mathtools}
\usepackage[margin=1in]{geometry}
\usepackage{microtype}
\usepackage{parskip}
\usepackage{needspace}
\usepackage{enumitem}
\usepackage[hidelinks,bookmarksnumbered=true]{hyperref}

\allowdisplaybreaks[2]
\newtheorem{theorem}{Theorem}[section]
\newtheorem{conjecture}[theorem]{Conjecture}
\newtheorem{lemma}[theorem]{Lemma}
\newtheorem{proposition}[theorem]{Proposition}
\theoremstyle{remark}
\newtheorem{remark}[theorem]{Remark}
\newcommand{\F}{\mathbb F}
\newcommand{\PP}{\mathbb P}
\newcommand{\E}{\mathcal E}
\newcommand{\1}{\mathbf1}
\newcommand{\norm}[1]{\left\lVert#1\right\rVert}
\title{Sharp spherical $L^p\to L^4$ extension estimates\\over finite fields in odd dimensions}
\author{Thang Pham \and Boqing Xue}
\date{}

\begin{document}
\maketitle
\begin{abstract}
Let $q$ be an odd prime power, let $d\ge3$ be odd, and let $S_j=\{\xi\in\F_q^d:\xi_1^2+\cdots+\xi_d^2=j\}$, where $j\ne0$. We prove the sharp extension estimate ${L^{4(d+1)/(3d+1)}(S_j, d\sigma)\to L^4(\F_q^d,dc)}$ whenever $\eta_q((-1)^{(d-1)/2}j)=-1$, where $\eta_q$ is the quadratic character. This confirms the odd-dimensional spherical extension conjecture of Koh, Pham, and Vinh over every finite field of odd characteristic. Together with the known estimate in the complementary sign class, this determines the sharp ${L^p\to{L^4}}$ extension range for all nonzero-radius spheres in odd dimensions. We also apply the endpoint estimate to obtain sufficient conditions for the unique recovery of sparse signals from incomplete Fourier data.
\end{abstract}

{\small
\textit{2020 Mathematics Subject Classification.} 42B05, 11T23, 52C10{, 94A12}.\\
\textit{Key words and phrases.} Finite fields, Fourier restriction, spheres,
additive energy, weighted incidences{, signal recovery}.
}

\section{Introduction}\label{sec:introduction}

Fourier restriction over finite fields asks {for norm estimates for the Fourier transform restricted to an algebraic surface}. Its dual formulation concerns the integrability of Fourier extensions of functions supported on the surface. Since the work of Mockenhaupt and Tao~\cite{MT}, this problem has been studied for paraboloids, cones, spheres, and other quadratic varieties. Spheres are of particular interest because their restriction estimates are closely related to questions in discrete geometry, additive combinatorics, and analytic number theory.

Let $q$ be an odd prime power, and fix a nontrivial additive character $\chi:\F_q\to\mathbb C^\times$. For $x,\xi\in\F_q^d$, write
\[
 x\cdot\xi=\sum_{\nu=1}^d x_\nu\xi_\nu,
 \qquad Q(\xi)=\xi\cdot\xi.
\]
For $j\in\F_q^*$, define the sphere
\[
 S_j=\{\xi\in\F_q^d:Q(\xi)=j\}.
\]
We follow the Fourier notation of~\cite{KPV}. Let $dc$ denote counting measure on $\F_q^d$ and let $d\sigma$ denote normalized surface measure on $S_j$. For $1\le p,r<\infty$, set
\[
 \norm{f}_{L^p(S_j,d\sigma)}
 =\left(\frac1{|S_j|}\sum_{\xi\in S_j}|f(\xi)|^p\right)^{1/p},
 \qquad
 \norm{g}_{L^r(\F_q^d,dc)}
 =\left(\sum_{x\in\F_q^d}|g(x)|^r\right)^{1/r},
\]
with the usual supremum norms when an exponent is infinite. For $A\subseteq\F_q^d$, the notation $L^r(A,dc)$ also refers to
counting measure. The extension of $f:S_j\to\mathbb C$ is
\[
 (f\,d\sigma)^\vee(x)
 =\frac1{|S_j|}\sum_{\xi\in S_j}f(\xi)\chi(x\cdot\xi).
\]
We denote by $R^*_{S_j}(p\to r)$ the least constant in
\begin{equation}\label{eq:extension}
 \norm{(f\,d\sigma)^\vee}_{L^r(\F_q^d,dc)}
 \le R^*_{S_j}(p\to r)\norm{f}_{L^p(S_j,d\sigma)}.
\end{equation}
The problem is to identify the exponents for which this constant is bounded independently of $q$ and $j$.

For $g:\F_q^d\to\mathbb C$, define the normalized and unnormalized Fourier transforms by
\[
 \widehat g(\xi)=q^{-d}\sum_{x\in\F_q^d}g(x)\chi(-x\cdot\xi),  \qquad
 \widetilde g(\xi)=\sum_{x\in\F_q^d}g(x)\chi(-x\cdot\xi) =q^d\widehat g(\xi).
\]
For a function $f$ on $\F_q^d$, set
\[
 f^\vee(x)=q^{-d}\sum_{\xi\in\F_q^d}f(\xi)\chi(x\cdot\xi).
\]
Fourier inversion and Plancherel give
\[
 g(x)=(\widetilde g)^\vee(x) =\sum_{\xi\in\F_q^d}\widehat g(\xi)\chi(x\cdot\xi),
 \qquad
 \norm{\widetilde g}_{L^2(\F_q^d,dc)} =q^{d/2}\norm{g}_{L^2(\F_q^d,dc)}.
\]
By duality,~\eqref{eq:extension} is equivalent to
\[
 \norm{\widetilde g}_{L^{p'}(S_j,d\sigma)} \le R^*_{S_j}(p\to r)\norm{g}_{L^{r'}(\F_q^d,dc)},
\]
where primes denote H\"older conjugates.

Throughout the paper, $\eta_q$ denotes the quadratic character of $\F_q$, extended by $\eta_q(0)=0$, and $|A|$ denotes the cardinality of a finite set $A$. We write ${X\ll_d Y}$ if $X\le C_dY$ for a constant depending only on $d$, and ${X\sim_d Y}$ if both inequalities hold. In particular, these constants are independent of the field, its characteristic, and the radius.

\subsection{Earlier work}

Iosevich and Koh~\cite{IK08} established extension estimates for nondegenerate quadratic surfaces over finite fields. For a
nonzero-radius sphere, their Fourier decay estimates give the finite-field Stein--Tomas bound
\begin{equation}\label{eq:stein-tomas}
 R^*_{S_j}\left(2\to\frac{2(d+1)}{d-1}\right)\ll_d1.
\end{equation}
Their subsequent work~\cite{IK10} improved the available {${L^p\to L^4}$} estimates in even dimensions and examined the
obstructions arising from affine subspaces contained in a sphere. {Two questions have received particular attention:} the extension problem with source exponent two, which has direct applications to distance sets, and the problem with target exponent four, which can be studied through additive energy.

Several works have also considered restriction estimates for special classes of functions. Kang and Koh~\cite{KK14} proved
the conjectured $L^2$ restriction estimate for nonzero-radius spheres in even dimensions when the ambient test functions are homogeneous of degree zero. Here this means that $g({\lambda}x)=g(x)$ for every ${\lambda}\in\F_q^*$ and $x\in\F_q^d$. Kang and Koh~\cite{KK26} subsequently introduced a connection with $S$-operators and obtained the corresponding estimates in {odd dimensions when $\eta_q((-1)^{(d-1)/2}j)=-1$}. The construction lifts the sphere to the homogeneous quadratic surface
\[
 \{(x,{\lambda})\in\F_q^{d+1}:Q(x)=j{\lambda}^2\}.
\]
Kang, Koh, and Yang~\cite{KKY26} determined the mapping properties of the $S$-operator. In particular, their Corollary~1.9 identifies a limitation of proving the even-dimensional conjecture for arbitrary functions by a two-step lifting argument with one fixed intermediate Lebesgue exponent. More recently, Pham and Xue~\cite{PX26} established the sharp estimate
\[
 R^*_{S_j}\left(2\to\frac{2d+4}{d}\right)\ll_d1  \qquad(d\ge4\text{ even})
\]
for nonzero level sets of arbitrary nondegenerate quadratic forms over odd finite fields.

For target exponent four, Iosevich, Koh, Lee, Pham, and Shen~\cite[Theorem~1.5]{IKLPS} proved that, for every even $d\ge4$ and every $j\ne0$,
\[
 R^*_{S_j}(p\to4)\ll_d1
 \quad\Longleftrightarrow\quad
 p\ge\frac{4d}{3d-2}.
\]
That paper also settled the $L^2$ extension problem for the zero-radius sphere.

Suppose henceforth that $d=2h+1\ge3$. The {dimensions of affine subspaces in $S_j$ depend on} $\eta_q((-1)^h j)$. If this sign is positive, the sphere contains an affine $h$-dimensional subspace. The corresponding obstruction, together with interpolation of \eqref{eq:stein-tomas} and the trivial ${L^1\to L^\infty}$ estimate, gives the sharp range
\begin{equation}\label{eq:positive-sign}
 R^*_{S_j}(p\to4)\ll_d1  \quad\Longleftrightarrow\quad p\ge\frac{4(d-1)}{3d-5}.
\end{equation}
See~\cite{IK10,KPV} for the affine-subspace obstruction. If $\eta_q((-1)^h j)=-1$, the largest affine subspaces of $S_j$
have dimension $h-1$. In this case, the same test requires only
\[
 p\ge p_d,\qquad p_d=\frac{4(d+1)}{3d+1},
\]
leaving a gap between the necessary condition and the known extension estimates.

Koh, Pham, and Vinh~\cite[Theorems~1.8--1.9]{KPV} obtained
\begin{equation}\label{eq:previous}
 R^*_{S_j}\left(\frac{4d}{3d-2}\to4\right)\ll_d1
\end{equation}
for primitive radii when $d\equiv1\pmod4$, and when $d\equiv3\pmod4$ and $q\equiv1\pmod4$. Dilation extends these results to all nonsquare radii in the same cases. Indeed, the quotient of two nonsquares is a square, and multiplication by a scalar identifies the corresponding extension problems. Their proof uses association schemes and additive energy. Koh and Pham~\cite[Theorem~1.3]{KP} proved \eqref{eq:previous} when $d\equiv3\pmod4$, $q\equiv3\pmod4$, and $j$ is a nonzero square, using point--hyperplane incidences derived from cone restriction. Thus, the estimate \eqref{eq:previous} was available throughout the negative sign class.

\subsection{The conjecture and the main result}

Koh, Pham, and Vinh~\cite[Conjecture~2.3]{KPV} proposed the following sharp endpoint. It is restated by Koh and Pham~\cite[Conjecture~1.2]{KP}. We use the quadratic character to combine its three cases into one statement.

\begin{conjecture}\label{conj:main}
Let $q$ be an odd prime power, let $d=2h+1\ge3$, and let $j\in\F_q^*$ satisfy
\[
 \eta_q((-1)^h j)=-1.
\]
Then
\[
 R^*_{S_j}\left(\frac{4(d+1)}{3d+1}\to4\right)\ll_d1.
\]
\end{conjecture}

The sign condition in Conjecture~\ref{conj:main} is equivalent to the following three cases:
\[
\begin{array}{c|c|c}
 d\pmod4 & q\pmod4 & j\\ \hline
 1 & 1\text{ or }3 & \text{nonsquare}\\
 3 & 1 & \text{nonsquare}\\
 3 & 3 & \text{nonzero square}.
\end{array}
\]
In this paper, we will prove this conjecture.

\begin{theorem}\label{thm:main}
Let $q$ be an odd prime power, let $d=2h+1\ge3$, and suppose that $j\in\F_q^*$ satisfies $\eta_q((-1)^h j)=-1$. Then
\begin{equation}\label{eq:main}
 \norm{(f\,d\sigma)^\vee}_{L^4(\F_q^d,dc)}
 \le C_d\norm{f}_{L^{p_d}(S_j,d\sigma)},
 \qquad p_d=\frac{4(d+1)}{3d+1},
\end{equation}
for every complex-valued function $f$ on $S_j$. The constant $C_d$ depends only on $d$. Moreover, for ${1\le p\le\infty}$, a uniform ${L^p\to L^4}$ extension estimate in this sign class holds if and only if ${p\ge p_d}$.
\end{theorem}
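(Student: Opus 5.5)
By duality and the identity $\|(f\,d\sigma)^\vee\|_4^4=|S_j|^{-4}q^d\sum_{a+b=c+e}f(a)f(b)\overline{f(c)f(e)}$ (sums over $a,b,c,e\in S_j$), the estimate \eqref{eq:main} is an additive energy bound for weighted subsets of $S_j$. The plan is to reduce first to indicator functions. A standard dyadic pigeonholing, together with restricted weak-type interpolation against the trivial $L^1\to L^\infty$ and the $L^2\to L^4$ bounds, reduces the claim to $f=\1_E$ with $E\subseteq S_j$, up to constants depending only on $d$. We then need
\[
 \Lambda(E):=\#\{(a,b,c,e)\in E^4:a+b=c+e\}\ll_d \frac{|E|^{3}}{q^{d}}\cdot\text{(main term)}+ q^{\frac{d-1}{2}\cdot\frac{?}{}}\dots
\]
More concretely, since $|S_j|\sim q^{d-1}$ and $p_d=4(d+1)/(3d+1)$, the target reads
\[
 \Lambda(E)\ll_d \frac{|E|^3}{q}+q^{\frac{d-1}{2}}|E|^{\frac{3d+1}{d+1}\cdot\frac{?}{}}.
\]
Rather than fixing the exact normalization here, we compute the required form $\Lambda(E)\ll |E|^3/q + q^{\alpha}|E|^{\beta}$ directly from \eqref{eq:main} with $f=\1_E$. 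The interpolation step loses at most an $\epsilon$-free dyadic constant because $p_d<2$, so restricted strong type suffices.

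To bound $\Lambda(E)$, we use the sphere geometry. If $a+b=c+e=m$ with all four points on $S_j$, then $a\cdot m=b\cdot m$, so $a$ and $b$ lie on the hyperplane $\{x:2x\cdot m=Q(m)\}$ intersected with $S_j$. Hence
\[
\Lambda(E)=\sum_m r(m)^2,\qquad r(m)=|E\cap(m-E)|\le |E\cap H_m|,
\]
where $H_m$ is that hyperplane. This turns the problem into a weighted point--hyperplane incidence problem between $E$ and the family $\{H_m\}$, with weights $r(m)$. We estimate it with a spectral (Fourier) incidence bound. The error term is governed by the Fourier transform of the set $S_j\cap H_m$, which is a $(d-2)$-dimensional sphere (a quadric in $d-1$ variables). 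Here the sign hypothesis $\eta_q((-1)^hj)=-1$ enters: the slices $S_j\cap H_m$ with $Q(m)\neq0$ are nondegenerate quadrics in even dimension $d-1=2h$, and their type (hyperbolic versus elliptic) is determined by a character value that we will compute. For the degenerate slices ($Q(m)=0$, tangent-type hyperplanes), the hypothesis should guarantee that the slice contains no affine $h$-dimensional subspaces. This is exactly the obstruction that separates $p_d$ from $4(d-1)/(3d-5)$ in \eqref{eq:positive-sign}.

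The key steps, in order, are:
\begin{enumerate}[label=(\roman*)]
\item Reduce to restricted type and to the energy inequality.
\item Split the $m$-sum according to whether $Q(m)$ is zero, a square, or a nonsquare, computing $|S_j\cap H_m|$ explicitly via Gauss sums.
\item Bound $\sum_m r(m)^2$ by Cauchy--Schwarz plus a weighted incidence estimate $\sum_m r(m)|E\cap H_m|\le |E|\,\|r\|_1/q+q^{(d-1)/2}\cdots$, obtained from the Fourier decay of the variety $\{(x,m):x\in S_j,\ 2x\cdot m=Q(m)\}$.
\item Optimize, using $r(m)\le\min(|E|,|S_j\cap H_m|)$, with the bound $|S_j\cap H_m|\ll q^{d-2}$ and the improved $L^\infty$ control of the largest affine subspaces of dimension $h-1$ in the negative class.
\end{enumerate}

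For the converse ("only if"), we test $f$ equal to the indicator of an affine $(h-1)$-dimensional subspace $W\subseteq S_j$, which exists in this sign class. The extension then has modulus $|W|/|S_j|$ on a set of size $q^{d-(h-1)}$. Comparing the two sides forces $p\ge p_d$. Monotonicity in $p$, coming from the normalized measure, gives all $p\ge p_d$ from the endpoint.

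The main obstacle is step (iii)–(iv) for the degenerate hyperplanes $Q(m)=0$ and for mid-sized $E$. There the crude incidence bound only recovers the earlier exponent $4d/(3d-2)$ of \eqref{eq:previous}. The new input must exploit that maximal isotropic structures in the slices have dimension $h-1$ rather than $h$. I would first try to prove a sharp bound on $\sum_{Q(m)=0}r(m)^2$ by lifting to the cone $\{Q(x)=j\lambda^2\}$ in $\F_q^{d+1}$, where the sign condition makes the cone's Witt index equal $h$. I would then apply the cone $L^2$ restriction/incidence estimate there, with the weight $r(m)$ handled by a second dyadic decomposition.
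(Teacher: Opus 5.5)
Your converse, testing the indicator of an affine $(h-1)$-flat, is correct and is the paper's sharpness argument. The upper bound, however, has a genuine gap: the estimate that carries the theorem is never pinned down (your displayed targets still have undetermined exponents), let alone proved. Normalizing \eqref{eq:main} at $f=\1_E$ with $|S_j|\sim_d q^{d-1}$, what you need is
\[
 \E(E,E)\ll_d\tau^{2/(d+1)}|E|^{(3d+1)/(d+1)},\qquad \tau=q^{(d-3)/2},\quad E\subseteq S_j .
\]
This is trivial for $|E|\le\tau$, and the term $|E|^3/q$ you anticipate is always dominated on $S_j$.

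Spectral bounds of the kind your step (iii) produces have error terms like $q^{\alpha}|E|^2$. They beat the trivial $|E|^3$ only for $|E|>q^{\alpha}$, and the best such bound ($\alpha=(d-2)/2$, Koh--Pham--Vinh) returns $4d/(3d-2)$, as you concede. Reaching $p_d$ requires beating $|E|^3$ on all of $\tau<|E|\le q^{(d-2)/2}$, where Fourier decay gives nothing. Your fallback, lifting to the cone and using cone restriction or incidences, is the route by which Koh and Pham obtained only $4d/(3d-2)$. Nor is the difficulty located at $Q(m)=0$. For the extremal flat $z+U\subseteq S_j$ (so $z\cdot u=Q(u)=0$ for $u\in U$), every sum satisfies $Q(2z+u_1+u_2)=4j$, so $H_m$ is tangent; the hyperplanes with $Q(m)=0$ are not the tangent ones.

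You have the right geometric fact, that no affine $h$-flat lies on $S_j$, but not the mechanism converting it into an energy bound. In the paper, one fixes $z\in A$ and uses $Q(a+b-z)-j=2(a-z)\cdot(b-z)$ for $a,b,z\in S_j$. Energy quadruples thus become point--hyperplane incidences $[a-z]\in(b-z)^\perp$ in $\PP^{d-1}$. Any complete bipartite sub-configuration spans subspaces $U\perp V$ with $\dim U+\dim V\le 2h+1$. Hence one side lies in an affine $h$-flat and so contains at most $2\tau$ points of $S_j$. The paper then applies a weighted incidence bound for configurations with no complete sub-configuration of mass $\ge\kappa$ on both sides, with explicit dependence on $\kappa\sim\tau$ (Proposition~\ref{prop:weighted}, proved by induction on dimension from a planar Cauchy--Schwarz bound). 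This gives \eqref{eq:mixed}, which at $A=B=E$ is exactly the displayed bound.

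Your step (i) is a second gap. Interpolation gives nothing at the endpoint itself, and $p_d<2$ does not upgrade restricted type to strong type. Decompose $f$ into level sets $A_i$ with $a_i=2^{-p_di}|A_i|$ and $\sum_ia_i\le1$. The restricted bound plus the triangle inequality then yields only $\sum_ia_i^{1/p_d}$. That quantity is controlled merely by $(\text{number of relevant scales})^{1-1/p_d}$, i.e.\ by $(\log q)^{1-1/p_d}$ up to constants. Since $C_d$ must be independent of $q$, this loss is fatal. The paper removes it by estimating the cross terms $\E(A_i,A_k)$ with the unequal exponents $|A|^{(2d+1)/(d+1)}|B|^{d/(d+1)}$ of \eqref{eq:mixed}, using $\E(A,B)\le|A|^2|B|$ when $2^{p_di}\le\tau$. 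This produces a factor $2^{-\gamma_d(k-i)}$, $\gamma_d=(d+1)/(3d+1)$, which is summable in the separation of the dyadic indices $i\le k$.
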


The {theorem lowers the source exponent in}~\eqref{eq:previous} {by}
\[
 \frac{4d}{3d-2}-p_d
 =\frac{8}{(3d-2)(3d+1)}.
\]
For example, the sharp source exponents in dimensions three, five, and seven are $8/5$, $3/2$, and $16/11$, respectively.
Together with~\eqref{eq:positive-sign}, the theorem determines the {sharp ${L^p\to L^4}$} extension range for every nonzero-radius sphere in odd dimensions.

The proof uses a mixed additive energy estimate. For $A,B\subseteq\F_q^d$, define
\[
 \E(A,B)  =|\{(a,b,a',b')\in A\times B\times A\times B: a+b=a'+b'\}|.
\]
Set $\tau=q^{h-1}=q^{(d-3)/2}$. We prove that, for all $A,B\subseteq S_j$,
\begin{equation}\label{eq:mixed-intro}
 \E(A,B)\ll_d
 \tau^{2/(d+1)}|A|^{(2d+1)/(d+1)}|B|^{d/(d+1)}
 +\tau|A|^2+\tau|A||B|.
\end{equation}
A bilinear summation of dyadic pieces then gives the strong endpoint in~\eqref{eq:main} without a logarithmic loss.

We compare our argument with the method of~\cite{KPV}. Koh, Pham, and Vinh~\cite{KPV} combine their energy estimate
\[
 \E(A,A)\ll_d \frac{|A|^3}{q} +q^{(d-2)/2}|A|^2
\]
with the trivial bound $\E(A,A)\le |A|^3$ and Fourier orthogonality to obtain~\eqref{eq:previous}. The source exponent $4d/(3d-2)$ is determined by the transition at $|A|\asymp q^{(d-2)/2}$, where the cubic bound and the quadratic term coincide.

Our weighted incidence argument gives
\[
 \E(A,A)\ll_d \tau^{2/(d+1)}|A|^{4/p_d}  \qquad (|A|\ge\tau).
\]
Together with the cubic bound for $|A|\le\tau$, this yields the sharp $L^{p_d}\to L^4$ estimate for characteristic functions. The essential improvement occurs in the range
\[
 \tau<|A|\le q^{(d-2)/2},
\]
where the preceding Koh--Pham--Vinh estimate, combined with the trivial estimate, gives only $O_d(|A|^3)$.
To obtain the endpoint for arbitrary functions, we use the full mixed estimate~\eqref{eq:mixed-intro} together
with $\E(A,B)\le |A|^2|B|$. Its unequal cardinality exponents give summable decay in the separation of
dyadic indices, yielding~\eqref{eq:main} without a logarithmic loss.

The proof of Theorem~\ref{thm:main} does not require sharp additive energy estimates. We do not claim that the estimates proved here or those of~\cite{KPV} are sharp in general. Determining the optimal energy bounds remains an interesting independent problem.

In Section~\ref{sec:recovery}, we apply Theorem~\ref{thm:main} to the unique recovery of sparse signals from incomplete Fourier data.

We remark that the $L^p\to L^4$ extension problem has also been studied for other varieties, for example, paraboloids~\cite{IKP10,LL12,GOS24}, cones~\cite{MT,GI25}, homogeneous varieties~\cite{KS12H}, algebraic curves~\cite{KS12C}, and quadratic surfaces~\cite{Ronda26}.

\section{Preliminaries}\label{sec:preliminaries}

\subsection{Weighted projective incidences}

We begin with a Cauchy--Schwarz incidence estimate over an arbitrary field
$\mathbb K$. For sets of points $P$ and lines $\mathcal L$, write
\[
 I(P,\mathcal L)=|\{(x,L)\in P\times\mathcal L:x\in L\}|.
\]
A complete incidence configuration consists of
a set of points and a set of hyperplanes such that every point
lies on every hyperplane.

\begin{lemma}\label{lem:planar}
For finite sets $P$ of distinct points and $\mathcal L$ of distinct
lines in $\PP^2(\mathbb K)$, we have
\[
 I(P,\mathcal L)\le {(|P||\mathcal L|)^{3/4}+|P|+|\mathcal L|}.
\]
\end{lemma}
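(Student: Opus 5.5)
The plan is the standard Cauchy--Schwarz argument that uses only the fact that two distinct points of $\PP^2(\mathbb K)$ lie on at most one common line. This is field-independent, so no Szemerédi--Trotter-type input is needed. For each line $L\in\mathcal L$ write $d_L=|P\cap L|$, so that $I=I(P,\mathcal L)=\sum_L d_L$. Distinct lines meet in at most one point, so each unordered pair of distinct points of $P$ lies on at most one line of $\mathcal L$. Counting pairs of points on common lines gives
\[
 \sum_{L\in\mathcal L} d_L(d_L-1)\le |P|(|P|-1)\le |P|^2 .
\]
Cauchy--Schwarz gives $\sum_L d_L^2\ge I^2/|\mathcal L|$. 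Hence
\[
 \frac{I^2}{|\mathcal L|}\le |P|^2+I, \qquad\text{so}\qquad I\le |P||\mathcal L|^{1/2}+|\mathcal L| .
\]
By duality in $\PP^2(\mathbb K)$, which swaps points and lines and preserves incidence, we likewise get $I\le |\mathcal L||P|^{1/2}+|P|$.

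Combining the two bounds into the symmetric form $(|P||\mathcal L|)^{3/4}+|P|+|\mathcal L|$ is the only real work. Set $m=|P|$ and $n=|\mathcal L|$, and assume by symmetry that $m\le n$. Then use the first inequality in the form $I\le m n^{1/2}+n$. We have $mn^{1/2}=(mn)^{3/4}\,(m/n)^{1/4}\le (mn)^{3/4}$, so $I\le (mn)^{3/4}+n\le (mn)^{3/4}+m+n$. When $m>n$, use the dual inequality instead. This settles the lemma with constant $1$.

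I expect no genuine obstacle here. The only care needed is deriving the quadratic inequality cleanly: solving $I^2\le n(m^2+I)$ gives $I\le \tfrac12\bigl(n+\sqrt{n^2+4nm^2}\bigr)\le n+m\sqrt n$. One should also note that distinctness of the points and of the lines is exactly what ensures two points determine at most one line of $\mathcal L$. Finally, no assumption on the characteristic or on the size of $\mathbb K$ is used.
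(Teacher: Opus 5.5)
Your proposal is correct and follows the paper's proof essentially line by line. Both arguments use the pair-counting bound $\sum_L d_L(d_L-1)\le|P|(|P|-1)$ and Cauchy--Schwarz to get $I\le|P||\mathcal L|^{1/2}+|\mathcal L|$, then its projective dual, then the case split on $|P|\le|\mathcal L|$ via $|P||\mathcal L|^{1/2}\le(|P||\mathcal L|)^{3/4}$. The only omission is that you should dispose of the case $|P||\mathcal L|=0$ separately, as the paper does, since your division by $|\mathcal L|$ and the factor $(m/n)^{1/4}$ need $|\mathcal L|>0$.
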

\begin{proof}
The assertion is immediate if ${|P||\mathcal L|}=0$.
Two distinct points determine a unique projective line, so
\[
 \sum_{L\in\mathcal L}{|P\cap L|(|P\cap L|-1)}
 \le {|P|(|P|-1)}.
\]
Cauchy--Schwarz gives
\[
 {I(P,\mathcal L)^2
 \le|\mathcal L|\sum_{L\in\mathcal L}|P\cap L|^2
 \le|\mathcal L|\bigl(|P|^2+I(P,\mathcal L)\bigr)}.
\]
{Hence,}
\[
 {I(P,\mathcal L)\le |P|\sqrt{|\mathcal L|}+|\mathcal L|}.
\]
By projective duality, also
\[
 {I(P,\mathcal L)\le |\mathcal L|\sqrt{|P|}+|P|}.
\]
If ${|P|\le|\mathcal L|}$, use the first bound and
${|P|\sqrt{|\mathcal L|}\le(|P||\mathcal L|)^{3/4}}$.
If ${|\mathcal L|\le|P|}$, use the second bound.
This completes the proof.
\end{proof}

{We next extend this estimate to weighted point--hyperplane
incidences in higher dimensions.}

\begin{proposition}\label{prop:weighted}
Let $D\ge2$. Give finite sets $P$ of points and $\mathcal H$ of
distinct hyperplanes in $\PP^D(\mathbb K)$ positive {real} weights
$w_x$ and $v_H$. Set
\[
 m=\sum_{x\in P}w_x,\qquad n=\sum_{H\in\mathcal H}v_H,
 \qquad I_w=\sum_{H\in\mathcal H}\sum_{x\in P\cap H}w_xv_H.
\]
Suppose that no complete incidence configuration
$P_0\times\mathcal H_0$ has both
$\sum_{x\in P_0}w_x\ge {\kappa}$ and
$\sum_{H\in\mathcal H_0}v_H\ge {\kappa}$, {where $\kappa>0$.}
Then
\begingroup
\begin{equation}\label{eq:weighted}
 I_w\ll_D {\kappa}^{2/(D+2)}(mn)^{(D+1)/(D+2)}+{\kappa}(m+n).
\end{equation}
\endgroup
\end{proposition}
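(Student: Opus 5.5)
Induct on $D$, with Lemma~\ref{lem:planar} as the model, and reduce the weighted count to an unweighted one by dyadic pigeonholing.

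\emph{Unweighted case.} First suppose all weights equal one, so $m=|P|$ and $n=|\mathcal H|$. The hypothesis says there is no complete configuration with $\kappa$ points and $\kappa$ hyperplanes, i.e.\ the incidence graph is $K_{\kappa,\kappa}$-free. I would aim for
\[
 I\ll_D \kappa^{2/(D+2)}(mn)^{(D+1)/(D+2)}+\kappa(m+n).
\]
One natural plan is the K\H{o}v\'ari--S\'os--Tur\'an double counting: count tuples $(H_1,\dots,H_k;x)$ with $x\in H_1\cap\dots\cap H_k$, and bound it from below by Jensen via $\sum_x \deg(x)^k$. To bound it from above, group the $k$-tuples of hyperplanes by their common intersection flat and use the forbidden configuration. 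This gives $I\ll \kappa^{1/k}mn^{1-1/k}+\kappa n$.

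That bound only matches \eqref{eq:weighted} after balancing. It yields exponent $(D+1)/(D+2)$ in $mn$ only if one also exploits the projective structure, as Lemma~\ref{lem:planar} does via ``two points span one line.'' So the cleaner route is a \emph{dimension induction}.

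\emph{Induction on $D$.} Project from a generic point, or equivalently slice by a generic pencil. Each point $x$ and hyperplane $H$ of $\PP^D$ is mapped to $\PP^{D-1}$ in such a way that incidences are preserved, and complete configurations in the image lift to complete configurations upstairs. Alternatively, use Cauchy--Schwarz in the Lemma~\ref{lem:planar} style:
\[
 I^2\le n\sum_H|P\cap H|^2,
\]
and count pairs of points lying on a common hyperplane via the $(D-1)$-dimensional incidences between the flats $H$ and the lines $xy$.

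Both routes must end in an exponent of the form $(D+1)/(D+2)$ with a $\kappa^{2/(D+2)}$ factor. The clean way to guarantee this is to check that the bound is invariant under the scaling $m,n,\kappa\mapsto tm,tn,t\kappa$. That invariance is exactly what the weighted formulation provides, since both sides are homogeneous of degree one in the weights.

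\emph{Weights.} Decompose $P$ into dyadic classes $P_a=\{x:w_x\in[2^a,2^{a+1})\}$ and similarly $\mathcal H_b$. Within a class pair, the incidence count is at most $2^{a+b+2}I(P_a,\mathcal H_b)$. The hypothesis transfers: a complete configuration inside $P_a\times\mathcal H_b$ with at least $\kappa 2^{-a}$ points and $\kappa 2^{-b}$ hyperplanes is forbidden. So the unweighted bound must be proved in an asymmetric form, forbidding $K_{s,t}$ with $s\ne t$:
\[
 I\ll (st)^{1/(D+2)}(|P||\mathcal H|)^{(D+1)/(D+2)}+t|P|+s|\mathcal H|.
\]
Substituting $s=\kappa2^{-a}$ and $t=\kappa2^{-b}$ gives, for the pair $(a,b)$, a contribution
\[
 \kappa^{2/(D+2)}(m_an_b)^{(D+1)/(D+2)}+\kappa(m_a+n_b),
\]
where $m_a$ and $n_b$ are the class masses.

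\emph{Main obstacle.} The main obstacle is summing over $(a,b)$ without logarithmic loss. I would handle this by truncation: classes with weight below $\kappa/\max(|P|,|\mathcal H|)$ contribute trivially. For the remaining classes, I would use the fact that the forbidden-configuration bound for a union of classes controls cross terms. More simply, I would apply the argument to a single suitably rescaled combined configuration, replicating each point $\lceil w_x/\delta\rceil$ times as a multiset. Note that the unweighted proof must then tolerate repeated points, and the Cauchy--Schwarz step must handle coincidences, which contribute the $\kappa(m+n)$ term.
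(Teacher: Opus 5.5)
Your proposal has a genuine gap. It lists possible routes but never carries out the induction step, and the first route is wrong.

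\textbf{The induction step.} Projection from a generic point $o$ does not give a point--hyperplane configuration in $\PP^{D-1}$. Only hyperplanes through $o$ project to hyperplanes; a hyperplane missing $o$ projects onto all of $\PP^{D-1}$. Slicing by a generic hyperplane has the dual defect, since points off the slice disappear.

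The workable route is the alternative you mention, and it is what the paper does: project from each $x\in P$ itself, keep only the hyperplanes through $x$, and compare with the Cauchy--Schwarz pair count. The projected configuration is then automatically weighted, because all points of $P$ on a line $L$ through $x$ collapse to one point of weight $W_L=\sum_{y\in L\setminus\{x\}}w_y$. So the induction must be run on the weighted statement itself, and no reduction to unweighted $K_{s,t}$-free counts is needed. The missing steps are:
(i) first delete every vertex of weight at least $\kappa$. This costs at most $\kappa(m+n)$, because such a vertex and its neighbours form a complete configuration, so the neighbours have total weight less than $\kappa$.
(ii) Check that complete configurations in the projection from $x$ lift, using that the fibres over distinct lines through $x$ are disjoint.
(iii) Apply induction to $T_x=\sum_{H\ni x}v_H\sum_{y\in(P\cap H)\setminus\{x\}}w_y$ to get $T_x\ll_D\kappa^{2/(D+1)}(md_x)^{D/(D+1)}+\kappa(m+d_x)$, where $d_x=\sum_{H\ni x}v_H$. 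Then sum against $w_x$ using H\"older, $\sum_xw_xd_x^{D/(D+1)}\le m^{1/(D+1)}I_w^{D/(D+1)}$.
(iv) Show $\sum_xw_xT_x\ge I_w^2/n-\kappa I_w$ by Cauchy--Schwarz. The diagonal term is at most $\kappa I_w$ only because step (i) made every weight less than $\kappa$.

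Solving $I_w^2/n\ll_D\kappa^{2/(D+1)}mI_w^{D/(D+1)}+\kappa m^2+\kappa I_w$ also produces a term $\sqrt\kappa\,m\sqrt n$. It is absorbed only after assuming $m\le n$ by duality and splitting into the cases $mn\ge\kappa^2$ and $mn<\kappa^2$; in the latter case $I_w\le mn$ suffices. Scaling invariance is only a consistency check; the exponent $(D+1)/(D+2)$ comes from this recursion.

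\textbf{The weights.} Your treatment of the weights also does not close. Dyadic classes lose a factor comparable to the number of classes. This happens both in $\sum_{a,b}(m_an_b)^{(D+1)/(D+2)}$ and in the linear terms, where $\kappa m_a$ is summed over every $b$. Truncating below $\kappa/\max(|P|,|\mathcal H|)$ still leaves about $\log\max(|P|,|\mathcal H|)$ classes, which is not bounded in terms of $D$.

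Replication into multisets is circular. The $K_{s,s}$-free bound for multisets is equivalent to the weighted statement with rational weights. Moreover, the argument of Lemma~\ref{lem:planar} (two distinct points span one line) fails for repeated points.

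Even superlevel sets combined with a $\kappa$-dependent unweighted bound would leave a factor $(\sum_i(\alpha_i-\alpha_{i-1})/\alpha_i)^{1/(D+2)}$. This is logarithmic in the ratio of the largest to the smallest weight.

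The paper avoids this because only the base case $D=2$ uses such a decomposition, and there Lemma~\ref{lem:planar} involves no $\kappa$. It writes $w_x=\sum_{\alpha_i\le w_x}(\alpha_i-\alpha_{i-1})$ over the distinct weight values and applies Lemma~\ref{lem:planar} to each pair of superlevel sets $P_i,\mathcal H_k$. It then sums with H\"older, $\sum_i(\alpha_i-\alpha_{i-1})|P_i|^{3/4}\le\alpha_{\max}^{1/4}m^{3/4}$. Thus $\kappa$ enters only through $\alpha_{\max},\beta_{\max}<\kappa$.
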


\begin{proof}
{We argue by induction on $D$. For the induction step, we project
from each point and compare the resulting incidence counts with
a weighted second moment.}

Remove every vertex of weight at least ${\kappa}$. Its neighbors have
total weight less than ${\kappa}$, by the hypothesis. The incidences
removed have total weight at most ${\kappa}(m+n)$. If no incidences remain,
the result follows. {It therefore suffices to prove the estimate
when every weight is less than $\kappa$.}
The desired right-hand side is increasing in both masses.

\begingroup
For $D=2$, let ${\alpha_1<\cdots<\alpha_{r_P}}$ and
${\beta_1<\cdots<\beta_{r_{\mathcal H}}}$ be the distinct point and hyperplane
weights, respectively. Set $\alpha_0=\beta_0=0$ and
\[
 P_i=\{x:w_x\ge\alpha_i\},\qquad
 \mathcal H_k=\{H:v_H\ge\beta_k\},\qquad
 \Delta\alpha_i=\alpha_i-\alpha_{i-1},\qquad
 \Delta\beta_k=\beta_k-\beta_{k-1}.
\]
For each point and hyperplane, summing the corresponding increments
recovers its weight. Thus,
\begingroup
\[
 I_w=\sum_{i=1}^{r_P}\sum_{k=1}^{r_{\mathcal H}}
       \Delta\alpha_i\Delta\beta_k I(P_i,\mathcal H_k),
 \qquad
 m=\sum_{i=1}^{r_P}\Delta\alpha_i|P_i|,\qquad
 n=\sum_{k=1}^{r_{\mathcal H}}\Delta\beta_k|\mathcal H_k|.
\]
\endgroup
Weighted H\"older gives
\begingroup
\begin{align*}
 \sum_{i=1}^{r_P}\Delta\alpha_i|P_i|^{3/4}
 &\le\left(\sum_{i=1}^{r_P}\Delta\alpha_i\right)^{1/4}
       \left(\sum_{i=1}^{r_P}\Delta\alpha_i|P_i|\right)^{3/4}
 =\alpha_{r_P}^{1/4}m^{3/4},\\
 \sum_{k=1}^{r_{\mathcal H}}\Delta\beta_k|\mathcal H_k|^{3/4}
 &\le\beta_{r_{\mathcal H}}^{1/4}n^{3/4}.
\end{align*}
\endgroup
Multiply the estimate in Lemma~\ref{lem:planar} by
$\Delta\alpha_i\Delta\beta_k$ and sum over $i$ and $k$.
Since ${\alpha_{r_P},\beta_{r_{\mathcal H}}<\kappa}$, we obtain
\begingroup
\[
 I_w\le(\alpha_{r_P}\beta_{r_{\mathcal H}})^{1/4}(mn)^{3/4}
          +\beta_{r_{\mathcal H}} m+\alpha_{r_P} n
 \le\kappa^{1/2}(mn)^{3/4}+\kappa(m+n).
\]
\endgroup
This proves~\eqref{eq:weighted} in dimension two.
\endgroup

Suppose that $D\ge3$ and that the result holds in dimension $D-1$.
By duality, assume $m\le n$. For each $x\in P$, let
\[
 d_x=\sum_{x\in H}v_H,
 \qquad
 T_x=\sum_{x\in H}v_H\sum_{y\in(P\setminus\{x\})\cap H}w_y.
\]
For each line $L$ through $x$ containing another point of $P$, let
$W_L=\sum_{y\in(P\setminus\{x\})\cap L}w_y$.
Project all lines through $x$ containing another point of $P$ to
points of $\PP^{D-1}(\mathbb K)$, with weights $W_L$.
The hyperplanes
through $x$ project to distinct hyperplanes with their original
weights. The two total masses are at most $m$ and $d_x$.
Every complete projected configuration lifts to a complete
configuration of the original points on its lines and the original
hyperplanes. The lifted point sets {exclude $x$ and} are disjoint, so their total weight is unchanged. The hypothesis is therefore preserved.
Induction, with $\gamma=D/(D+1)$, gives
\begingroup
\[
 T_x\ll_D {\kappa}^{2/(D+1)}(m d_x)^\gamma+{\kappa}m+{\kappa}d_x.
\]
\endgroup
Multiply by $w_x$ and sum over $x$. Since
$\sum_xw_xd_x=I_w$, weighted H\"older gives
\[
 \sum_xw_xd_x^\gamma\le m^{1-\gamma}I_w^\gamma.
\]
{ On the other hand, we bound $\sum_xw_xT_x$ from below. For
$H\in\mathcal H$, set
\[
 s_H=\sum_{x\in P\cap H}w_x,
 \qquad\text{so that}\qquad
 I_w=\sum_{H\in\mathcal H}v_Hs_H.
\]
Exchanging the order of summation in the definition of $T_x$ gives
\[
 \sum_{x\in P}w_xT_x
 =\sum_{H\in\mathcal H}v_H
   \sum_{\substack{x,y\in P\cap H\\ x\ne y}}w_xw_y
 =\sum_{H\in\mathcal H}v_H
   \left(s_H^2-\sum_{x\in P\cap H}w_x^2\right),
\]
since $s_H^2=\sum_{x,y\in P\cap H}w_xw_y$ includes the diagonal
terms $x=y$. By the Cauchy--Schwarz inequality,
\[
 I_w^2
 =\left(\sum_{H\in\mathcal H}\sqrt{v_H}\cdot\sqrt{v_H}\,s_H\right)^2
 \le\left(\sum_{H\in\mathcal H}v_H\right)
   \left(\sum_{H\in\mathcal H}v_Hs_H^2\right)
 =n\sum_{H\in\mathcal H}v_Hs_H^2.
\]
Since every weight is less than $\kappa$, we have $w_x^2<\kappa w_x$,
and therefore
\[
 \sum_{H\in\mathcal H}v_H\sum_{x\in P\cap H}w_x^2
 \le\kappa\sum_{H\in\mathcal H}v_Hs_H
 =\kappa I_w.
\]
Combining the last three displays, we obtain
\[
 \sum_{x\in P}w_xT_x\ge\frac{I_w^2}{n}-\kappa I_w.
\]}
Consequently,
\begingroup
\[
 \frac{I_w^2}{n}\ll_D
 {\kappa}^{2/(D+1)}mI_w^\gamma+{\kappa}m^2+{\kappa}I_w.
\]
\endgroup
Solving according to the largest term on the right gives
\begingroup
\[
 I_w\ll_D {\kappa^{2/(D+2)}(mn)^{(D+1)/(D+2)}}
 +\sqrt{{\kappa}}\,m\sqrt n+{\kappa}n.
\]
\endgroup
If $mn\ge {\kappa}^2$, then by $m\le n$ we have
\begin{gather*}
 \sqrt{{\kappa}}\,m\sqrt n
 \le\sqrt{{\kappa}}(mn)^{3/4}
 \le{\kappa^{2/(D+2)}(mn)^{(D+1)/(D+2)}},
\end{gather*}
where the last inequality follows from
\[\frac{{\kappa^{2/(D+2)}(mn)^{(D+1)/(D+2)}}}
      {\sqrt{{\kappa}}(mn)^{3/4}}
 =\left(\frac{mn}{{\kappa}^2}\right)^{(D-2)/(4(D+2))}\ge1.\]
If $mn<{\kappa}^2$, use $I_w\le mn\le{\kappa^{2/(D+2)}(mn)^{(D+1)/(D+2)}}$ instead.
Adding back the removed incidences proves~\eqref{eq:weighted}.
This completes the induction.
\end{proof}

\begin{remark}
    The corresponding unweighted estimate, with an implicit
constant depending on $\kappa$, follows from
Milojevi\'c, Sudakov, and Tomon~\cite[Theorem~1.2]{MST}.
The elementary proof of Proposition~\ref{prop:weighted} allows weights and makes the dependence on $\kappa$
explicit. This dependence is essential in the proof of
Theorem~\ref{thm:main}.
\end{remark}

\begingroup
\begin{remark}\label{rem:lewko-incidences}
The sharp $L^p\to L^4$ estimate in Theorem~\ref{thm:main}
requires only the elementary planar bound in
Lemma~\ref{lem:planar}.
Using the stronger planar incidence bound of
Lewko~\cite[Theorem~1]{Lewko26}
(see also Miao and Xie~\cite[Theorem~1.1]{MX}
for odd prime fields), a similar argument gives,
under the hypotheses of Proposition~\ref{prop:weighted},
\[
 I_w\ll_D
 \kappa^{2/(D+1)}(mn)^{D/(D+1)}
 +\kappa(m+n)+\frac{mn}{\mathfrak p},
\]
where $\mathfrak p=\operatorname{char}\mathbb K$ in positive characteristic,
and the last term is omitted in characteristic zero.
In positive characteristic, the last term is absorbed by the first
whenever $mn\le\kappa^2\mathfrak p^{D+1}$.
\end{remark}
\endgroup
\subsection{A geometric property of the sphere}

For the rest of the paper, let $d=2h+1\ge3$ and assume
$\eta_q((-1)^h j)=-1$. Recall that $\tau=q^{h-1}=q^{(d-3)/2}$.
The following lemma gives the geometric properties of $S_j$
needed for the weighted incidence argument and the
sharpness example.
\begin{lemma}\label{lem:geometry}
The largest dimension of an affine subspace contained in $S_j$
is $h-1$. Every affine $h$-dimensional subspace $F$ satisfies
$|F\cap S_j|\le2\tau$.
Moreover, ${|S_j|=q^{d-1}-q^h\sim_d q^{d-1}}$.
\end{lemma}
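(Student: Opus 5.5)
The plan is to use the classification of nondegenerate quadratic spaces over $\F_q$ by dimension and discriminant, together with Witt's theorem. Three things must be shown: the count $|S_j|$, the bound $h-1$ on flats, and the intersection bound for $h$-flats.

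\emph{Counting $|S_j|$.} The standard character-sum formula for the number of solutions of $Q(\xi)=j$ in odd dimension $d$ gives
\[
 |S_j|=q^{d-1}+q^{(d-1)/2}\eta_q\bigl((-1)^{(d-1)/2}j\bigr)=q^{d-1}+q^h\eta_q((-1)^hj).
\]
This follows by writing $|S_j|=q^{-1}\sum_{t}\sum_\xi\chi(t(Q(\xi)-j))$ and evaluating the Gauss sums. Under the sign hypothesis this equals $q^{d-1}-q^h\sim_d q^{d-1}$.

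\emph{The bound $h-1$ on flats.} Suppose $a+W\subseteq S_j$, with $W$ a linear subspace of dimension $k$. Expanding $Q(a+tw)=j$ for all $t\in\F_q$ and $w\in W$ shows that $W$ is totally isotropic and $a\cdot w=0$ for all $w\in W$. Since $Q(a)=j\neq0$, we have $a\notin W$. Then $a^\perp$ is a nondegenerate $2h$-dimensional space of discriminant $\det(Q)/j=1/j$, which is $j$ modulo squares, and it contains the totally isotropic subspace $W$. A $2h$-dimensional quadratic space is hyperbolic, that is, has Witt index $h$, exactly when $(-1)^h\operatorname{disc}$ is a square. Here $\eta_q((-1)^hj)=-1$, so the Witt index of $a^\perp$ is $h-1$, and hence $k\le h-1$. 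For attainment, choose $a$ with $Q(a)=j$; then $a^\perp$ has a totally isotropic subspace of dimension $h-1$, so $h-1$ is achieved.

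\emph{Intersection with an $h$-flat.} Let $F=a+W$ with $\dim W=h$, and restrict $Q$ to $F$. In coordinates on $W$ this gives $Q(a+w)=Q|_W(w)+2a\cdot w+Q(a)$. The main obstacle is a case analysis on the radical $R$ of $Q|_W$. The whole space has Witt index $h$, since $d=2h+1$, but I will bound the count directly through the rank of $Q|_W$. Decompose $W=R\oplus U$, where $Q|_U$ is nondegenerate of rank $r$ and $\dim R=h-r$.

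If the linear form $w\mapsto 2a\cdot w$ is nonzero on $R$, then the equation $Q(a+w)=j$ is linear in one radical coordinate. For each choice of the remaining coordinates there is exactly one solution, giving $q^{h-1}=\tau$ points. If the linear form vanishes on $R$, then after completing the square on $U$ the equation reads $Q|_U(u)=c$, with $R$ free. This gives $q^{h-r}\,N_r(c)$ points, where $N_r(c)$ is the number of solutions of $Q|_U(u)=c$.

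The standard counts are $N_r(c)\le q^{r-1}+q^{r/2}$, or $N_r(c)\le q^{r-1}+q^{(r-1)/2}$ for $r$ odd. The worst case is $c=0$ with $Q|_U$ hyperbolic, where $N_r(0)=q^{r-1}+q^{r/2}-q^{r/2-1}$. The total is then at most $q^{h-1}+q^{h-r/2}$, which is at most $2\tau$ once $r\ge2$. The dangerous cases are small $r$, in particular $r=0$, where $W$ is totally isotropic of dimension $h$.

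Here I will use the structure of the ambient space. If $r\le 1$ and $c=j'$ is the relevant constant, then $F\cap S_j$ is either empty, or a union of at most two translates of $R$, or all of $F$. All of $F$ is impossible, because $S_j$ contains no $h$-flat. A translate of $R$ lying in $S_j$ is an affine subspace of dimension $h-r$, so the previous step forces $h-r\le h-1$. Thus $r=1$, and there are at most two such translates, giving at most $2q^{h-1}=2\tau$ points. Similarly, $r=0$ with $c\ne0$ gives no points, and $r=0$ with $c=0$ would make $F\subseteq S_j$, which is excluded.

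It remains to check that $c=0$ with $Q|_U$ hyperbolic of rank $2$ also gives at most $2\tau$. In that case $N_2(0)=2q-1$, so the count is $q^{h-2}(2q-1)<2\tau$. Larger $r$ is handled by the bound $q^{h-1}+q^{h-r/2}\le 2\tau$ above.
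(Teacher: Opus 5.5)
Your proof is correct. For the maximal flat dimension and for $|S_j|$ it is essentially the paper's argument, but for the $h$-flat intersection bound it takes a different and longer route.

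\textbf{The $h$-flat bound.} The paper argues in one line. No $h$-flat lies in $S_j$, so the restriction of $Q-j$ to $F$ is a nonzero polynomial of degree at most two in $h$ affine coordinates. The elementary polynomial zero bound then gives $|F\cap S_j|\le 2q^{h-1}$.

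You instead split $W=R\oplus U$ along the radical of $Q|_W$ and use the exact solution counts $N_r(c)$ for nondegenerate forms. Each case checks out:
\begin{itemize}
\item exactly $\tau$ points when $w\mapsto a\cdot w$ is nontrivial on $R$;
\item at most $q^{h-1}+q^{h-r/2}\le 2\tau$ when $r\ge 2$;
\item at most two translates of $R$ when $r=1$;
\item $r=0$ excluded by the flat result.
\end{itemize}
The payoff is that you get the actual size of $F\cap S_j$ in each case, for instance exactly $2\tau$ when $r=1$ and $Q|_U(u)=c$ has two roots. The cost is reliance on the Gauss-sum formulas for $N_r(c)$ and a case split, whereas the zero bound needs only $F\not\subseteq S_j$. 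Your final check of the rank-two hyperbolic case is redundant, since the $r\ge 2$ estimate already covers it.

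\textbf{The maximal flat dimension.} Your discriminant criterion for $a^\perp$ to be hyperbolic uses the same invariant the paper computes directly, namely the Gram determinant $(-1)^h j$ in the basis $z,u_1,\ldots,u_h,v_1,\ldots,v_h$. The paper also proves attainment from scratch: nonemptiness of $S_j$, isotropy of forms in at least three variables, and splitting off hyperbolic planes. You instead cite the Witt index $h-1$ and get nonemptiness from $q^{d-1}-q^h>0$, which is also fine.
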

\begin{proof}
{We first determine the largest affine subspaces contained in
$S_j$. The intersection bound will then follow from the polynomial
zero bound, and the size of $S_j$ from a Gauss sum calculation.}

If $z+U\subset S_j$, {then, for every $u\in U$,}
\[
 {Q(z\pm u)=j\pm2z\cdot u+Q(u)=j}.
\]
{Since $q$ is odd, $z\cdot u=Q(u)=0$. Polarization gives
$2u\cdot v=Q(u+v)-Q(u)-Q(v)=0$ for $u,v\in U$.}
{Thus,} $U\subset z^\perp$ is totally isotropic. The restriction
of the dot product to $z^\perp$ is nondegenerate, since $Q(z)=j\ne0$.
If $\dim U=h$, choose a basis $u_1,\ldots,u_h$ and vectors
$v_1,\ldots,v_h\in z^\perp$ with $u_i\cdot v_{{\nu}}=\delta_{i{\nu}}$.
The Gram matrix in the basis $z,u_1,\ldots,u_h,v_1,\ldots,v_h$
has determinant
\[
 j\det\begin{pmatrix}0&I_h\\I_h&{(v_\mu\cdot v_\nu)_{\mu,\nu=1}^h}\end{pmatrix}=(-1)^h j.
\]
Since the Gram matrix in the standard basis is the identity, this
determinant is the square of a nonzero change-of-basis determinant.
This contradicts the sign assumption. No affine $h$-flat, or larger
flat, lies on $S_j$.

The sphere is nonempty: the sets of squares and of $j$ minus a
square each have $(q+1)/2$ elements and intersect.
Choose $z\in S_j$.
Every nondegenerate quadratic form of dimension at least three
over $\F_q$ has a nonzero isotropic vector. Indeed, after
diagonalization, the sets $\{a x^2:x\in\F_q\}$ and
$\{-c-b y^2:y\in\F_q\}$ intersect for nonzero $a,b,c$.
An isotropic vector $u$ can be paired with $v$ satisfying
$u\cdot v=1$; replace $v$ by $v-Q(v)u/2$ to make it isotropic.
Successively splitting off these hyperbolic planes in $z^\perp$
produces a totally isotropic space $U$ of dimension $h-1$.
Thus, $z+U\subset S_j$. For $h=1$, take $U=\{0\}$.

The restriction of $Q-j$ to an affine $h$-flat is a nonzero
polynomial of degree at most two. The elementary polynomial zero
bound over $\F_q$ therefore gives $|F\cap S_j|\le2q^{h-1}$.

Finally, we recall the standard computation of $|S_j|$
using quadratic Gauss sums.
Let $G=\sum_{u\in\F_q}\chi(u^2)$.
The standard quadratic Gauss identities are
$G^2=\eta_q(-1)q$ and
$\sum_u\chi({\lambda}u^2)=\eta_q({\lambda})G$ for ${\lambda}\ne0$.
Character orthogonality now gives
\[
 |S_j|=q^{d-1}+q^{-1}G^d\sum_{{\lambda}\ne0}\eta_q({\lambda})\chi(-{\lambda}j)
 =q^{d-1}+\eta_q((-1)^h j)q^h.
\]
This proves the asserted size formula and completes the proof.
\end{proof}

\subsection{Energy and Fourier orthogonality}

Recall the mixed additive energy $\E(A,B)$ defined in
Section~\ref{sec:introduction}. It is symmetric in $A$ and $B$,
and it satisfies the trivial bound
\begin{equation}\label{eq:trivial}
 \E(A,B)\le |A|^2|B|.
\end{equation}
Indeed, the three entries $a,z\in A$ and $b\in B$ determine
$c=a+b-z$ uniquely. No ordering of $|A|$ and $|B|$ is needed.

The following consequence of character orthogonality allows us
to pass from additive energy bounds to extension estimates.

\begin{lemma}\label{lem:orthogonality}
Suppose that $g_1,g_2:S_j\to\mathbb C$ are supported on
$A,B\subset S_j$, respectively. Then
\begingroup
\begin{equation}\label{eq:bilinear}
 \norm{(g_1\,d\sigma)^\vee(g_2\,d\sigma)^\vee}_{L^2(\F_q^d,dc)}
 \le\frac{q^{d/2}}{|S_j|^2}
 \norm{g_1}_\infty\norm{g_2}_\infty\E(A,B)^{1/2}.
\end{equation}
\endgroup
Moreover,
\begingroup
\begin{equation}\label{eq:indicator-energy}
 \norm{(\1_A\,d\sigma)^\vee}_{L^4(\F_q^d,dc)}^4
 =\frac{q^d}{|S_j|^4}\E(A,A).
\end{equation}
\endgroup
\end{lemma}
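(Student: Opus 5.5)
Both parts come from expanding the product of extensions and applying character orthogonality in $x$.

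\emph{Expanding.} By definition,
\[
 (g_1\,d\sigma)^\vee(x)(g_2\,d\sigma)^\vee(x)
 =\frac1{|S_j|^2}\sum_{a\in A,\,b\in B}g_1(a)g_2(b)\chi(x\cdot(a+b)).
\]
Group the terms by the value $\xi=a+b$. Set
\[
 F(\xi)=\sum_{\substack{a\in A,\,b\in B\\ a+b=\xi}}g_1(a)g_2(b).
\]
Then the product equals $|S_j|^{-2}\sum_\xi F(\xi)\chi(x\cdot\xi)$, which is $|S_j|^{-2}q^dF^\vee(x)$.

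\emph{Plancherel.} The map $f\mapsto q^{d}f^\vee$ is the unnormalized transform with $\chi$ conjugated. Therefore
\[
 \sum_x\Bigl|\sum_\xi F(\xi)\chi(x\cdot\xi)\Bigr|^2=q^d\sum_\xi|F(\xi)|^2 .
\]
This can also be checked directly: expand the square and use $\sum_x\chi(x\cdot(\xi-\xi'))=q^d\1_{\xi=\xi'}$. It follows that
\[
 \norm{(g_1\,d\sigma)^\vee(g_2\,d\sigma)^\vee}_{L^2}^2=\frac{q^d}{|S_j|^4}\sum_\xi|F(\xi)|^2 .
\]

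\emph{Bounding $\sum_\xi|F(\xi)|^2$.} Write $r(\xi)=|\{(a,b)\in A\times B:a+b=\xi\}|$. Then $|F(\xi)|\le\norm{g_1}_\infty\norm{g_2}_\infty r(\xi)$. Moreover $\sum_\xi r(\xi)^2$ counts quadruples $(a,b,a',b')$ with $a+b=a'+b'$, so it equals $\E(A,B)$. Taking square roots gives \eqref{eq:bilinear}.

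\emph{The identity \eqref{eq:indicator-energy}.} Take $g_1=g_2=\1_A$, so $F=r$ with $B=A$. The only inequality used above, $|F|\le\norm{g_1}_\infty\norm{g_2}_\infty r$, is then an equality. The chain becomes
\[
 \norm{(\1_A\,d\sigma)^\vee}_{L^4}^4=\norm{\bigl((\1_A\,d\sigma)^\vee\bigr)^2}_{L^2}^2=\frac{q^d}{|S_j|^4}\E(A,A).
\]

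The main obstacle is bookkeeping the normalizations: $1/|S_j|$ in the extension and $q^{\pm d}$ in orthogonality. No step is conceptually hard.
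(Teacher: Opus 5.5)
Your proof is correct and follows the paper's argument. Expanding the square and applying character orthogonality in $x$ gives $\frac{q^d}{|S_j|^4}\sum_\xi|F(\xi)|^2$, which is exactly the paper's quadruple sum over $a+b=z+c$. Taking absolute values then gives the bound, and specializing to $g_1=g_2=\1_A$ gives the identity; grouping by $\xi=a+b$ only repackages the same computation.
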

\begin{proof}
Expanding the squared norm and using character orthogonality gives
\begingroup
\[
 \norm{(g_1\,d\sigma)^\vee(g_2\,d\sigma)^\vee}_{{L^2(\F_q^d,dc)}}^2
 =\frac{q^d}{|S_j|^4}
 \sum_{\substack{a,z\in A,\ b,c\in B\\a+b=z+c}}
 g_1(a)g_2(b)\overline{g_1(z)g_2(c)}.
\]
\endgroup
Taking absolute values proves~\eqref{eq:bilinear}.
Setting $g_1=g_2=\1_A$ gives~\eqref{eq:indicator-energy}.
This completes the proof.
\end{proof}

\section{Proof of the main result}\label{sec:proof}

\subsection{A mixed additive energy estimate}

We first prove the estimate presented in the introduction. {We then apply it to products of dyadic pieces of the extension.}

\begin{proposition}\label{prop:mixed}
For all $A,B\subset S_j$, with $\tau=q^{h-1}$, we have
\begingroup
\begin{equation}\label{eq:mixed}
 \E(A,B)\ll_d
 \tau^{2/(d+1)}|A|^{(2d+1)/(d+1)}|B|^{d/(d+1)}
 +\tau|A|^2+\tau|A||B|.
\end{equation}
\endgroup
\end{proposition}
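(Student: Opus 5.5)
The plan is to fix $a\in A$, reduce the count of solutions with that first coordinate to a point--hyperplane incidence problem in $\PP^{d-1}(\F_q)$, apply Proposition~\ref{prop:weighted} with $D=d-1$ and $\kappa\sim\tau$, and sum over $a$.

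\textbf{Step 1 (linearization).} Write $a'=a+u$. Then $b'=b-u$. The condition $a'\in S_j$ reads $Q(u)=-2a\cdot u$, and the condition $b'\in S_j$ reads $Q(u)=2b\cdot u$. Subtracting, the second condition becomes the homogeneous linear equation
\[
 (a+b)\cdot u=0 .
\]
Hence
\[
 \E(A,B)=\sum_{a\in A}\bigl|\{(u,b):u\in (A-a),\ b\in B,\ (a+b)\cdot u=0\}\bigr|.
\]
Two kinds of terms are degenerate.
\begin{itemize}
\item The terms with $u=0$ contribute at most $|A||B|$ in total.
\item The terms with $a+b=0$ contribute at most $|A|$ for each $a$, so at most $|A|^2$ in total.
\end{itemize}
Both are absorbed by $\tau|A||B|+\tau|A|^2$.

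\textbf{Step 2 (the incidence problem for fixed $a$).} In the remaining terms, group each nonzero $u$ by its projective point $[u]\in\PP^{d-1}$, and give that point weight $w$ equal to the number of $u\in A-a$ on the line through $0$ and $u$. Similarly, group each nonzero $c=a+b$ by the hyperplane $c^\perp$, and give it weight $v$ equal to the number of $b\in B$ with $a+b$ proportional to $c$. The hyperplanes so obtained are distinct. The count for fixed $a$ is then exactly the weighted incidence $I_w$ of Proposition~\ref{prop:weighted}, with masses $m_a\le|A|$ and $n_a\le|B|$.

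\textbf{Step 3 (the non-concentration hypothesis).} This is the main point. Suppose $P_0\times\mathcal H_0$ is a complete configuration. Let $W$ be the linear span of the $u$'s in $P_0$, with $\dim W=k$. Every normal $c$ in $\mathcal H_0$ then lies in $W^\perp$, which has dimension $d-k$.
\begin{itemize}
\item The point mass of $P_0$ is at most $|S_j\cap(a+W)|$, since $u\in W$ and $a+u\in A\subset S_j$.
\item The hyperplane mass of $\mathcal H_0$ is at most $|S_j\cap(-a+W^\perp)|$, since $b=c-a\in S_j$.
\end{itemize}
These are affine flats of dimensions $k$ and $d-k$, and $k+(d-k)=2h+1$, so one of them has dimension at most $h$. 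A flat of dimension at most $h-1$ has at most $q^{h-1}=\tau$ points. A flat of dimension exactly $h$ meets $S_j$ in at most $2\tau$ points, by Lemma~\ref{lem:geometry}. Thus the hypothesis of Proposition~\ref{prop:weighted} holds with $\kappa=2\tau+1\ll\tau$.

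\textbf{Step 4 (summation).} Proposition~\ref{prop:weighted} with $D=d-1\ge2$ gives, for each $a$,
\[
 I_w\ll_d \tau^{2/(d+1)}(|A||B|)^{d/(d+1)}+\tau(|A|+|B|).
\]
Summing over $a\in A$ gives
\[
 \tau^{2/(d+1)}|A|^{(2d+1)/(d+1)}|B|^{d/(d+1)}+\tau|A|^2+\tau|A||B|,
\]
which is \eqref{eq:mixed}.

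The step I expect to need the most care is Step~3. Two points must be checked there. First, the weights attached to projective points and hyperplanes must be such that the mass of a complete configuration is really bounded by the number of points of $S_j$ on the corresponding affine flat, including lines through $a$ lying in $S_j$ and multiplicities in $c$. Second, the dimension bookkeeping between $W$ and $W^\perp$ must be correct, since it is what forces one of the two flats to have dimension at most $h$.
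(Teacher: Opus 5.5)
Your proposal is correct and is essentially the paper's proof: you fix one element of $A$, turn the sphere condition on the dependent coordinate into an orthogonality relation, projectivize with multiplicity weights, verify non-concentration via $\dim W+\dim W^\perp=d$ together with the $2\tau$ bound for $h$-flats, and apply Proposition~\ref{prop:weighted} with $D=d-1$ and $\kappa=2\tau+1$. The paper instead uses hyperplane normals $b-z$, via $Q(a+b-z)-j=2(a-z)\cdot(b-z)$, where you use $a+b$; this is only a relabeling of the quadruple. One small correction: the display in Step~1 should be an inequality $\le$ rather than an equality, since you have replaced the condition $b-u\in B$ by the weaker condition $b-u\in S_j$.
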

\begin{proof}
Fix $z\in A$. The identity
\[
 Q(a+b-z)-j=2(a-z)\cdot(b-z)
\]
shows that every energy quadruple satisfies
$(a-z)\cdot(b-z)=0$. {Dropping the condition $a+b-z\in B$, we obtain}
\[
 {\E(A,B)\le\sum_{z\in A}
 |\{(a,b)\in A\times B:(a-z)\cdot(b-z)=0\}|}.
\]
Summed over all $z\in A$, triples with $a=z$ or $b=z$
contribute at most $|A||B|+|A|^2$.

{ For the remaining pairs, fix $z\in A$ and consider the projective
space $\PP(\F_q^d)=\PP^{d-1}(\F_q)$. Let $P_z$ be the set of
projective points $[a-z]$ with $a\in A\setminus\{z\}$, and give
the point $[u]\in P_z$ the weight
\[
 w_{[u]}=|\{a\in A\setminus\{z\}:[a-z]=[u]\}|,
\]
the number of points of $A\setminus\{z\}$ on the line
$z+\langle u\rangle$. Let $\mathcal H_z$ be the set of projective
hyperplanes $(b-z)^\perp$ with $b\in B\setminus\{z\}$, and give
the hyperplane $H$ the weight
\[
 v_H=|\{b\in B\setminus\{z\}:(b-z)^\perp=H\}|.
\]
Since the dot product is nondegenerate, distinct normal directions
give distinct hyperplanes. The total masses satisfy
\[
 m_z=\sum_{[u]\in P_z}w_{[u]}\le|A|,
 \qquad
 n_z=\sum_{H\in\mathcal H_z}v_H\le|B|,
\]
and the weighted incidence count of this configuration is
\[
 I_z=|\{(a,b)\in(A\setminus\{z\})\times(B\setminus\{z\}):
        (a-z)\cdot(b-z)=0\}|.
\]

We verify the hypothesis of Proposition~\ref{prop:weighted} with
$D=d-1$ and $\kappa=2\tau+1$. Let $P_0\times\mathcal H_0$ be a
complete incidence configuration with $P_0\subseteq P_z$ and
$\mathcal H_0\subseteq\mathcal H_z$. Let $A_0$ be the set of
$a\in A\setminus\{z\}$ with $[a-z]\in P_0$, and let $B_0$ be the
set of $b\in B\setminus\{z\}$ with $(b-z)^\perp\in\mathcal H_0$.
Since distinct projective points have disjoint sets of endpoints,
\[
 \sum_{[u]\in P_0}w_{[u]}=|A_0|,
 \qquad
 \sum_{H\in\mathcal H_0}v_H=|B_0|.
\]
Completeness means that $(a-z)\cdot(b-z)=0$ for all $a\in A_0$
and $b\in B_0$. Let $U$ and $V$ be the linear spans of
$\{a-z:a\in A_0\}$ and $\{b-z:b\in B_0\}$. By bilinearity,
$U\perp V$, so $V\subseteq U^\perp$. Nondegeneracy gives
$\dim U^\perp=d-\dim U$, and hence
\[
 \dim U+\dim V\le d=2h+1.
\]
Therefore, $\dim U\le h$ or $\dim V\le h$. Suppose that
$\dim U\le h$; the other case is identical with $B_0$ and $V$
in place of $A_0$ and $U$. Every $a\in A_0$ lies in the affine
subspace $z+U$, which has dimension at most $h$. Choose an
affine $h$-flat $F\supseteq z+U$. Since $A_0\subseteq S_j\cap F$,
Lemma~\ref{lem:geometry} gives
\[
 \sum_{[u]\in P_0}w_{[u]}=|A_0|\le|F\cap S_j|\le2\tau<\kappa.
\]
Thus, no complete incidence configuration has both masses at
least $\kappa$, as required.

Proposition~\ref{prop:weighted} with $D=d-1$, together with
$m_z\le|A|$, $n_z\le|B|$, and $\kappa=2\tau+1\le3\tau$, yields
\[
 I_z\le C_d\bigl(\tau^{2/(d+1)}(|A||B|)^{d/(d+1)}
                +\tau(|A|+|B|)\bigr).
\]
Here we used that the right-hand side of~\eqref{eq:weighted} is
increasing in both masses. Summing over the $|A|$ choices of $z$
and adding the at most $|A|^2+|A||B|$ pairs with $a=z$ or $b=z$,
we obtain
\[
 \E(A,B)\le C_d\tau^{2/(d+1)}|A|^{(2d+1)/(d+1)}|B|^{d/(d+1)}
          +(C_d\tau+1)\bigl(|A|^2+|A||B|\bigr).
\]
Since $\tau\ge1$, this proves~\eqref{eq:mixed}.}
\end{proof}

\subsection{The endpoint estimate}

\begin{proof}[Proof of the upper bound in Theorem~\ref{thm:main}]
{We expand the square of the extension into products of dyadic
pieces. The trivial energy bound controls the lower indices;
Proposition~\ref{prop:mixed} gives a summable geometric factor
for the remaining indices.}

The assertion is immediate for $f=0$. Assume that $f$ is nonzero.
By homogeneity, suppose ${\sum_{\xi\in S_j}|f(\xi)|^{p_d}=1}$.
For integers $i\ge1$, let
\begingroup
\[
 A_i=\{\xi{\in S_j}:2^{-i}<|f(\xi)|\le2^{1-i}\},\qquad
 f_i=f\1_{A_i},\qquad a_i=2^{-p_di}|A_i|.
\]
\endgroup
These sets partition the nonzero support of $f$ and satisfy
\begingroup
\begin{equation}\label{eq:masses}
 \sum_{i\ge1}a_i\le1,\qquad |A_i|\le2^{p_di},\qquad
 \norm{f_i}_\infty\le2^{1-i}.
\end{equation}
\endgroup
Expand ${((f\,d\sigma)^\vee)^2}$ and apply the triangle inequality in ${L^2(\F_q^d,dc)}$.
Equation~\eqref{eq:bilinear} and symmetry give
\begingroup
\begin{equation}\label{eq:sum-target}
 |S_j|^2q^{-d/2}\norm{(f\,d\sigma)^\vee}_{{L^4(\F_q^d,dc)}}^2
 \ll\sum_{1\le i\le k}2^{-i-k}\E(A_i,A_k)^{1/2}.
\end{equation}
\endgroup
{We will show that the sum on the right is
${\ll_d\tau^{1/(d+1)}}$.}
Choose the integer ${i_0}\ge0$ by
\begingroup
\[
 2^{p_d{i_0}}\le\tau<2^{p_d({i_0}+1)}.
\]
\endgroup

For $i\le {i_0}$, apply~\eqref{eq:trivial} and~\eqref{eq:masses}.
Since ${p_d<2}$ and ${3p_d/2-2=4/(3d+1)>0}$,
\begingroup
\begin{align}
 \sum_{i\le {i_0}}\sum_{k\ge i}2^{-i-k}\E(A_i,A_k)^{1/2}
 &\le\sum_{i\le {i_0}}2^{(p_d-1)i}
             \sum_{k\ge i}2^{(p_d/2-1)k}\notag\\
 &\ll_d2^{(3p_d/2-2){i_0}}
 \le\tau^{3/2-2/p_d}=\tau^{1/(d+1)}.
 \label{eq:low}
\end{align}
\endgroup
The sum is empty if ${i_0}=0$.

For $i>{i_0}$, apply~\eqref{eq:mixed}. Set
\begingroup
\[
 \alpha=\frac{2d+1}{2(d+1)},\qquad
 \beta=\frac{d}{2(d+1)},\qquad
 \gamma_d=\frac{d+1}{3d+1}.
\]
\endgroup
Then ${p_d\alpha-1=\gamma_d}$, ${p_d\beta-1=-\gamma_d}$, and
${\alpha+\beta=2/p_d>1}$. The contribution of the first term in
\eqref{eq:mixed} is at most
\begingroup
\begin{align}
 &\tau^{1/(d+1)}
 \sum_{i>{i_0}}\sum_{k\ge i}2^{-\gamma_d(k-i)}a_i^\alpha a_k^\beta
 \ll_d\tau^{1/(d+1)}.\label{eq:high-main}
\end{align}
\endgroup
To justify the last inequality explicitly, for each integer $\ell\ge0$,
H\"older and~\eqref{eq:masses} give
\[
 \sum_{i>{i_0}}a_i^\alpha a_{i+\ell}^\beta
 \le\left(\sum_{i>{i_0}}a_i^{\alpha+\beta}\right)^{\alpha/(\alpha+\beta)}
      \left(\sum_{i>{i_0}}a_{i+\ell}^{\alpha+\beta}\right)^{\beta/(\alpha+\beta)}
 \le1.
\]
Now sum the geometric series ${\sum_{\ell\ge0}2^{-\gamma_d\ell}}$.
In particular, there is no logarithmic loss in~\eqref{eq:high-main}.

The {remaining two} terms in~\eqref{eq:mixed} give, respectively,
\begingroup
\begin{align}
 \sqrt\tau\sum_{i>{i_0}}\sum_{k\ge i}2^{-i-k}|A_i|
 &\ll\sqrt\tau\sum_{i>{i_0}}2^{(p_d-2)i}
 \ll_d\tau^{1/(d+1)},\label{eq:linear-one}\\
 \sqrt\tau\sum_{i>{i_0}}\sum_{k\ge i}2^{-i-k}(|A_i||A_k|)^{1/2}
 &\ll_d\sqrt\tau\sum_{i>{i_0}}2^{(p_d-2)i}
 \ll_d\tau^{1/(d+1)}.\label{eq:linear-two}
\end{align}
\endgroup
Here ${2^{p_d{i_0}}\sim_d\tau}$ and
${1/2+(p_d-2)/p_d=3/2-2/p_d=1/(d+1)}$.

Combining~\eqref{eq:sum-target}--\eqref{eq:linear-two} gives
\begingroup
\[
 \norm{(f\,d\sigma)^\vee}_{{L^4(\F_q^d,dc)}}^2
 \ll_d\frac{q^{d/2}\tau^{1/(d+1)}}{|S_j|^2}
 \sim_d |S_j|^{-2/p_d}
 =\norm{f}_{L^{p_d}(S_j,d\sigma)}^2.
\]
\endgroup
For the middle comparison, use ${|S_j|\sim_d q^{d-1}}$ and
\begingroup
\[
 \frac d2+\frac{d-3}{2(d+1)}
 =(d-1)\left(2-\frac2{p_d}\right).
\]
\endgroup
This proves the upper bound for every odd $q$, with a constant
depending only on $d$.
\end{proof}

\subsection{Sharpness}

\begin{proof}[Proof of the sharpness assertion in Theorem~\ref{thm:main}]
Let $F_0\subset S_j$ be an affine $(h-1)$-flat
given by Lemma~\ref{lem:geometry}. Its additive energy is
$|F_0|^3=q^{3(h-1)}$. Hence,
\begingroup
\[
 \norm{(\1_{F_0}\,d\sigma)^\vee}_{{L^4(\F_q^d,dc)}}
 =\frac{q^{d/4}|F_0|^{3/4}}{|S_j|}
 \sim_d q^{-(3d+1)/8},
 \qquad
 \norm{\1_{F_0}}_{L^p({S_j,}d\sigma)}
 \sim_d q^{-(d+1)/(2p)}.
\]
\endgroup
A uniform bound requires ${p\ge4(d+1)/(3d+1)}$.
Conversely, normalized counting measure is a probability measure,
so the endpoint implies every larger source exponent.
This completes the proof of Theorem~\ref{thm:main}.
\end{proof}

\begingroup
\section{An application to signal recovery}\label{sec:recovery}

\subsection{Recovery from extension estimates}\label{subsec:recovery-extension}

Let $g:\F_q^d\to\mathbb C$ be supported on $E\subseteq\F_q^d$.
Suppose that the unnormalized Fourier coefficients
${\widetilde g(\xi)}$ are known for $\xi\notin\Sigma$, where
$\Sigma\subseteq\F_q^d$ is the known set of unobserved frequencies.
We seek to determine $g$ uniquely among signals with at most $|E|$
nonzero entries, without knowing its support.
{The classical uncertainty principle states that every
nonzero function $v:\F_q^d\to\mathbb C$ satisfies}
\begingroup
\[
 |\operatorname{supp}v|\,|\operatorname{supp}\widetilde v|\ge q^d.
\]
\endgroup
{If two distinct candidate signals $g_1$ and $g_2$ have the
same observed Fourier coefficients, then $v=g_1-g_2$ is nonzero,
has at most $2|E|$ nonzero entries, and satisfies
$\operatorname{supp}\widetilde v\subseteq\Sigma$.}
Applying the uncertainty principle to this difference gives
the sufficient condition
\begin{equation}\label{eq:recovery-classical}
 |E||\Sigma|<\frac{q^d}{2}.
\end{equation}
This is the recovery argument of Donoho and Stark~\cite{DS}; see
also~\cite{IM} for its finite-group formulation.

Restriction estimates yield stronger uncertainty inequalities when
the missing frequencies have additional structure. This connection
is developed in~\cite{IJM, IM}. Applying it to
Theorem~\ref{thm:main} gives the following result.

\begin{theorem}\label{thm:recovery}
Let $q$ be an odd prime power, let $d=2h+1\ge3$, and let
$j\in\F_q^*$ satisfy $\eta_q((-1)^h j)=-1$.
There is a constant $\varepsilon_d>0$, depending only on $d$, with
the following property. Let $b\in\F_q^d$, let
$\varnothing\ne\Sigma\subseteq b+S_j$, and let
$E\subseteq\F_q^d$. If
\begin{equation}\label{eq:recovery-condition}
 |E||\Sigma|^{(d-1)/(d+1)}
 <\varepsilon_d q^{\,d-1+4/(d+1)},
\end{equation}
then every signal $g:\F_q^d\to\mathbb C$ supported on $E$ is
uniquely determined among all signals with at most $|E|$ nonzero
entries by its Fourier coefficients outside $\Sigma$.
\end{theorem}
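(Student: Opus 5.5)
Suppose $g_1\ne g_2$ are two candidates with at most $|E|$ nonzero entries each and the same Fourier coefficients outside $\Sigma$. The plan is to show that $v=g_1-g_2$ must vanish. The function $v$ is supported on a set $E'$ with $|E'|\le2|E|$, and $\widetilde v$ is supported in $\Sigma\subseteq b+S_j$. Replacing $v$ by $v(x)\chi(-x\cdot b)$ translates $\widetilde v$ by $-b$ and leaves $|v|$ unchanged. So we may assume $b=0$ and $\operatorname{supp}\widetilde v\subseteq\Sigma\subseteq S_j$. We will derive an uncertainty inequality from the dual form of Theorem~\ref{thm:main} that contradicts \eqref{eq:recovery-condition} when $v\ne0$.

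Fourier inversion gives
\[
 v(x)=q^{-d}\sum_{\xi\in S_j}\widetilde v(\xi)\chi(x\cdot\xi)=q^{-d}|S_j|\,(\widetilde v\,d\sigma)^\vee(x).
\]
Apply H\"older on $E'$, then Theorem~\ref{thm:main} with $p=p_d$, then H\"older on $\Sigma$ from $L^{p_d}$ to $L^2$:
\[
 \norm{v}_{L^2}\le|E'|^{1/4}\norm{v}_{L^4}
 \le C_d|E'|^{1/4}q^{-d}|S_j|\,\norm{\widetilde v}_{L^{p_d}(S_j,d\sigma)}
 \le C_d|E'|^{1/4}q^{-d}|S_j|\Bigl(\frac{|\Sigma|}{|S_j|}\Bigr)^{1/p_d-1/2}\norm{\widetilde v}_{L^2(S_j,d\sigma)}.
\]
By Plancherel, $\norm{\widetilde v}_{L^2(S_j,d\sigma)}=|S_j|^{-1/2}q^{d/2}\norm{v}_{L^2}$. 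If $v\ne0$, dividing by $\norm{v}_{L^2}$ yields
\[
 1\le C_d|E'|^{1/4}q^{-d/2}|S_j|^{1/2}\Bigl(\frac{|\Sigma|}{|S_j|}\Bigr)^{1/p_d-1/2}.
\]

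It remains to match exponents. One computes $1/p_d-1/2=(3d+1)/(4(d+1))-1/2=(d-1)/(4(d+1))$. Raising the last display to the fourth power and using $|S_j|\sim_d q^{d-1}$ gives
\[
 q^{2d}\ll_d|E'|\,|\Sigma|^{(d-1)/(d+1)}\,q^{2(d-1)}\,q^{-(d-1)^2/(d+1)}.
\]
Since $2d-2(d-1)+(d-1)^2/(d+1)=d-1+4/(d+1)$, this becomes $|E|\,|\Sigma|^{(d-1)/(d+1)}\ge\varepsilon_d\,q^{d-1+4/(d+1)}$ for a suitable $\varepsilon_d$, absorbing $C_d^4$, the factor $2$ from $|E'|\le2|E|$, and the constants in $|S_j|\sim_d q^{d-1}$. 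This contradicts \eqref{eq:recovery-condition}, so $v=0$.

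None of these steps is a real obstacle. The parts needing care are the normalization bookkeeping: the conversion between $\widetilde v$ and the extension operator, the modulation reducing to $b=0$, and the factor $2$ from $|E'|\le2|E|$. The exponent arithmetic must also come out exactly to $d-1+4/(d+1)$.
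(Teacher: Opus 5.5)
Your proof is correct and takes essentially the same route as the paper's. You reduce to $b=0$ by modulation, bound $\norm{v}_{L^4}$ using Fourier inversion, Theorem~\ref{thm:main}, H\"older on $\Sigma$ and Plancherel, and then close with H\"older on the support of $v$, which has at most $2|E|$ points. Your exponent bookkeeping, including $1/p_d-1/2=(d-1)/(4(d+1))$ and $2+(d-1)^2/(d+1)=d-1+4/(d+1)$, agrees with the paper's \eqref{eq:recovery-kernel}.
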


\begin{proof}
Multiplication by $\chi(-b\cdot x)$ shifts the Fourier support
by $-b$ and preserves the spatial support. Thus, we may assume
that $b=0$. Let $v\ne0$ have Fourier support in $\Sigma$.
Extend ${\widetilde v|_\Sigma}$ by zero to $S_j$.
Fourier inversion, Theorem~\ref{thm:main}, H\"older's inequality,
and Plancherel give
\begingroup
\begin{align*}
\norm{v}_{L^4(\F_q^d,dc)} \ll_d q^{-d}|S_j|^{1-1/p_d} \norm{\widetilde v}_{L^{p_d}(\Sigma,dc)} \le q^{-d/2}|S_j|^{1-1/p_d} |\Sigma|^{1/p_d-1/2}\norm{v}_{L^2(\F_q^d,dc)}.
\end{align*}
\endgroup
Since ${p_d=4(d+1)/(3d+1)}$ and ${|S_j|\sim_d q^{d-1}}$,
\begingroup
\begin{equation}\label{eq:recovery-kernel}
 \norm{v}_{L^4(\F_q^d,dc)}^4
 \ll_d q^{-d+1-4/(d+1)}|\Sigma|^{(d-1)/(d+1)}
             \norm{v}_{L^2(\F_q^d,dc)}^4.
\end{equation}
\endgroup
On the other hand,
\begingroup
\[
 \norm{v}_{L^2(\F_q^d,dc)}^4
 \le |\operatorname{supp}v|\norm{v}_{L^4(\F_q^d,dc)}^4.
\]
\endgroup
Consequently,
\begingroup
\begin{equation}\label{eq:recovery-uncertainty}
 |\operatorname{supp}v|
 \gg_d q^{d-1+4/(d+1)}|\Sigma|^{-(d-1)/(d+1)}.
\end{equation}
\endgroup
If two signals with at most $|E|$ nonzero entries have the same
observed coefficients, their difference has Fourier support in
$\Sigma$ and spatial support of size at most $2|E|$.
For $\varepsilon_d$ sufficiently small,
\eqref{eq:recovery-condition} and~\eqref{eq:recovery-uncertainty}
force this difference to vanish. This completes the proof.
\end{proof}

The same proof using the earlier estimate~\eqref{eq:previous}
in place of Theorem~\ref{thm:main} gives the sufficient condition
\begin{equation}\label{eq:recovery-previous}
 |E||\Sigma|^{(d-2)/d}
 <\widetilde\varepsilon_d q^{d-1+2/d}.
\end{equation}
Indeed, the source exponent ${p=4d/(3d-2)}$ satisfies
${4/p-2=(d-2)/d}$, and the fourth power of the corresponding
factor is bounded by a constant multiple of
$q^{-d+1-2/d}|\Sigma|^{(d-2)/d}$.

The Stein--Tomas estimate~\eqref{eq:stein-tomas} gives a condition
independent of $|\Sigma|$:
\begin{equation}\label{eq:recovery-stein-tomas}
 |E|<c_dq^{(d+1)/2}.
\end{equation}
To see this, let $r=2(d+1)/(d-1)$. For every $v$ with Fourier
support in $\Sigma$, Fourier inversion,~\eqref{eq:stein-tomas},
and Plancherel give
\begingroup
\[
 \norm{v}_{L^r(\F_q^d,dc)}
 \ll_d q^{-d/2}|S_j|^{1/2}\norm{v}_{L^2(\F_q^d,dc)}
 \ll_d q^{-1/2}\norm{v}_{L^2(\F_q^d,dc)}.
\]
\endgroup
For $v\ne0$, H\"older's inequality and
$1/2-1/r=1/(d+1)$ imply
${|\operatorname{supp}v|\gg_d q^{(d+1)/2}}$.
Applying this to the difference of two candidate signals proves
\eqref{eq:recovery-stein-tomas}.

\subsection{Recovery from additive energy}\label{subsec:recovery-energy}

The use of additive energy in uncertainty principles and signal
recovery is developed in Iosevich--Mayeli~\cite{IM} and
Aldahleh et al.~\cite{AIIJMP}. We record a version with an explicit
logarithmic loss and include its proof.

\begin{lemma}\label{lem:energy-recovery}
Let $\varnothing\ne\Sigma\subseteq\F_q^d$, let $E\subseteq\F_q^d$,
and let $M\ge1$. Suppose that
\[
 \E(A,A)\le M|A|^2\qquad\text{for every }A\subseteq\Sigma.
\]
There is an absolute constant $\varepsilon>0$ such that, if
\begin{equation}\label{eq:energy-recovery}
 |E|<\frac{\varepsilon q^d}{M\log^2(2|\Sigma|)},
\end{equation}
then every signal supported on $E$ is uniquely determined among
all signals with at most $|E|$ nonzero entries by its Fourier
coefficients outside $\Sigma$.
\end{lemma}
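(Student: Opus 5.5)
The plan is to follow the same uncertainty-principle scheme as in Theorem~\ref{thm:recovery}, with the extension estimate replaced by a direct energy argument. Let $v\ne0$ have $\operatorname{supp}\widetilde v\subseteq\Sigma$. It suffices to show
\[
 |\operatorname{supp}v|\gg \frac{q^d}{M\log^2(2|\Sigma|)} .
\]
Applied to the difference of two candidate signals, which has at most $2|E|$ nonzero entries, condition~\eqref{eq:energy-recovery} with $\varepsilon$ small then forces the difference to vanish.

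First, obtain an $L^4$ bound for $v$. Write $v(x)=q^{-d}\sum_{\xi\in\Sigma}\widetilde v(\xi)\chi(x\cdot\xi)$ and split $\Sigma$ dyadically by the size of $|\widetilde v|$. Normalize so that $\max|\widetilde v|=1$. Let
\[
 A_i=\{\xi\in\Sigma:2^{-i}<|\widetilde v(\xi)|\le2^{1-i}\}
\]
for $1\le i\le K$, where $K\approx\log_2(2|\Sigma|)+1$, and let $R$ be the remaining set where $|\widetilde v|\le2^{-K}$. Set $v_i$ to be the inverse transform of $\widetilde v\1_{A_i}$.

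Character orthogonality, as in Lemma~\ref{lem:orthogonality}, gives
\[
 \norm{v_i}_4^4\le q^{-3d}\,2^{4(1-i)}\E(A_i,A_i)\le q^{-3d}2^{4(1-i)}M|A_i|^2,
\]
so $\norm{v_i}_4\ll q^{-3d/4}M^{1/4}2^{-i}|A_i|^{1/2}$. Since $|A_i|2^{-2i}\le\norm{\widetilde v}_2^2=q^d\norm v_2^2$ by Plancherel, each piece satisfies
\[
 \norm{v_i}_4\ll q^{-d/4}M^{1/4}\norm v_2 .
\]
The triangle inequality over $K$ pieces then loses a factor $K\ll\log(2|\Sigma|)$.

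The tail $R$ is handled crudely. Using $\norm{\cdot}_4\le\norm{\cdot}_2$ and Plancherel,
\[
 \norm{v_R}_4\le q^{-d/2}\norm{\widetilde v\1_R}_2\le q^{-d/2}|\Sigma|^{1/2}2^{-K}\le q^{-d/2}2^{-K/2}.
\]
Compare this with $\norm v_2=q^{-d/2}\norm{\widetilde v}_2\ge q^{-d/2}$, which holds because $\max|\widetilde v|=1$. The tail is therefore at most $q^{-d/2}2^{-K/2}\le\norm v_2\,|\Sigma|^{-1/2}$, hence $\ll q^{-d/4}M^{1/4}\norm v_2$ as well. Here one uses $|\Sigma|\le q^d$ and $M\ge1$, so that $|\Sigma|^{-1/2}\le q^{-d/4}\cdot q^{d/4}|\Sigma|^{-1/2}$ after choosing $K$ slightly larger. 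Altogether,
\[
 \norm v_4\ll q^{-d/4}M^{1/4}\log(2|\Sigma|)\norm v_2 .
\]

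Finally, Hölder gives $\norm v_2^4\le|\operatorname{supp}v|\,\norm v_4^4$. Combining,
\[
 |\operatorname{supp}v|\gg q^d M^{-1}\log^{-4}(2|\Sigma|).
\]
This loses $\log^4$ rather than the claimed $\log^2$, so the square root must be recovered. That is the main obstacle.

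To obtain $\log^2$, work with $\norm v_4^2=\norm{v^2}_2$ and expand $v^2=\sum_{i,k}v_iv_k$, as in the proof of Theorem~\ref{thm:main}. Cauchy--Schwarz gives $\E(A_i,A_k)\le\E(A_i,A_i)^{1/2}\E(A_k,A_k)^{1/2}\le M|A_i||A_k|$, so by~\eqref{eq:bilinear}
\[
 \norm{v_iv_k}_2\ll q^{-3d/2}M^{1/2}2^{-i-k}(|A_i||A_k|)^{1/2}.
\]
Summing over the $K^2$ pairs and using Cauchy--Schwarz in $(i,k)$ yields
\[
 \norm{v^2}_2\ll q^{-3d/2}M^{1/2}K\sum_i2^{-2i}|A_i|\ll q^{-d/2}M^{1/2}K\norm v_2^2 .
\]
Hence $\norm v_4^4\ll q^{-d}MK^2\norm v_2^4$, and therefore
\[
 |\operatorname{supp}v|\gg q^d/(MK^2)=q^d/\bigl(M\log^2(2|\Sigma|)\bigr),
\]
with the tail treated as above. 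The step I would check most carefully is this bilinear Cauchy--Schwarz counting: it must produce exactly one factor of $K$ in $\norm v_4^2$, hence $K^2$ in the final bound.
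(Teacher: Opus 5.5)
Your treatment of the dyadic pieces is fine, but the tail estimate fails as written. With the normalization $\max|\widetilde v|=1$, the only lower bound you have is $\norm{v}_2\ge q^{-d/2}$. Meanwhile the bound $\norm{v_R}_4\le\norm{v_R}_2\le q^{-d/2}|\Sigma|^{1/2}2^{-K}$ ignores that $v_R$ is spread over all of $\F_q^d$, so passing from $\ell^2$ to $\ell^4$ is very lossy. To deduce $\norm{v_R}_4\ll q^{-d/4}M^{1/4}\norm v_2$ from it, you need $|\Sigma|^{1/2}2^{-K}\ll q^{-d/4}M^{1/4}$. With $2^K\sim|\Sigma|$ this holds only when $|\Sigma|\gg q^{d/2}M^{-1/2}$. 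Your claimed inequality $|\Sigma|^{-1/2}\ll q^{-d/4}M^{1/4}$ is false whenever $|\Sigma|$ is small compared with $q^{d/2}M^{-1/2}$, for instance $M$ bounded and $|\Sigma|$ bounded, and the lemma allows this case. The displayed identity $|\Sigma|^{-1/2}=q^{-d/4}\cdot q^{d/4}|\Sigma|^{-1/2}$ gives nothing. Enlarging $K$ until the crude bound works forces $2^K\gtrsim q^{d/4}|\Sigma|^{1/2}M^{-1/4}$, i.e.\ $K\gtrsim\tfrac d4\log_2 q$ in that regime. That replaces $\log^2(2|\Sigma|)$ by something of order $d^2\log^2q$ and no longer yields an absolute $\varepsilon$.

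The repair is short. Since $R\subseteq\Sigma$, the energy hypothesis applies to $R$ itself, and the same orthogonality computation gives
\[
 \norm{v_R}_4\le q^{-3d/4}2^{-K}\E(R,R)^{1/4}
 \le q^{-3d/4}M^{1/4}2^{-K}|\Sigma|^{1/2}
 \le q^{-3d/4}M^{1/4}\le q^{-d/4}M^{1/4}\norm{v}_2
\]
once $2^K\ge|\Sigma|^{1/2}$. Alternatively, use $\norm{v_R}_4^4\le\norm{v_R}_\infty^2\norm{v_R}_2^2$ with $\norm{v_R}_\infty\le q^{-d}|\Sigma|2^{-K}$. This is how the paper handles $f_{\mathrm{small}}$, via $\sum_{\xi\in\Sigma}|f_{\mathrm{small}}(\xi)|\le1$.

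With this change your argument is correct and is essentially the paper's. Since $\sum_{i,k}2^{-i-k}(|A_i||A_k|)^{1/2}=\bigl(\sum_i2^{-i}|A_i|^{1/2}\bigr)^2$, your bilinear expansion of $v^2$ amounts to Cauchy--Schwarz in $i$: $\sum_i2^{-i}|A_i|^{1/2}\le K^{1/2}\bigl(\sum_i2^{-2i}|A_i|\bigr)^{1/2}$. The paper applies this directly after the triangle inequality in $L^4$. Your first attempt lost the extra logarithm only because it bounded each $2^{-i}|A_i|^{1/2}$ separately by $\norm{\widetilde v}_2$ instead of using this $\ell^2$ summability. The detour through products $v_iv_k$ and $\E(A_i,A_k)$ is therefore unnecessary.
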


\begin{proof}
{Let $f:\Sigma\to\mathbb C$ be nonzero.
By homogeneity, assume that
$\norm{f}_{L^2(\Sigma,dc)}=1$.} Set
$f_{\mathrm{small}}=f\1_{\{|f|\le |\Sigma|^{-1}\}}$, and, for $i\ge0$, let
\[
 A_i=\{\xi\in\Sigma:2^{-i-1}<|f(\xi)|\le2^{-i},\
                            |f(\xi)|>|\Sigma|^{-1}\}.
\]
There are $O(\log(2|\Sigma|))$ nonempty sets $A_i$.
Orthogonality and the energy hypothesis imply
\begingroup
\[
 \left\lVert\sum_{\xi\in A_i}f(\xi)\chi(x\cdot\xi)
 \right\rVert_{L^4(\F_q^d,dc)}^4
 \le q^d2^{-4i}\E(A_i,A_i)
 \le q^dM2^{-4i}|A_i|^2.
\]
\endgroup
Since $\sum_i2^{-2i}|A_i|\le4$, Cauchy--Schwarz gives
\begingroup
\[
 \sum_i2^{-i}|A_i|^{1/2}\ll\sqrt{\log(2|\Sigma|)}.
\]
\endgroup
Also, $\sum_{\xi\in\Sigma}|f_{\mathrm{small}}(\xi)|\le1$.
The triangle inequality therefore yields
\begingroup
\begin{equation}\label{eq:energy-weighted-transfer}
 \left\lVert\sum_{\xi\in\Sigma}f(\xi)\chi(x\cdot\xi)
 \right\rVert_{L^4(\F_q^d,dc)}
 \ll q^{d/4}M^{1/4}\sqrt{\log(2|\Sigma|)}
                 \norm{f}_{L^2(\Sigma,dc)}.
\end{equation}
\endgroup
If $v\ne0$ has Fourier support in $\Sigma$, Fourier inversion,
Plancherel, and~\eqref{eq:energy-weighted-transfer} give
\begingroup
\[
 \norm{v}_{L^4(\F_q^d,dc)}^4
 \ll q^{-d}M\log^2(2|\Sigma|)\norm{v}_{L^2(\F_q^d,dc)}^4.
\]
\endgroup
H\"older's inequality now implies
\begingroup
\[
 |\operatorname{supp}v|\gg\frac{q^d}{M\log^2(2|\Sigma|)}.
\]
\endgroup
For $\varepsilon$ sufficiently small, two signals with at most $|E|$
nonzero entries and the same observed coefficients cannot have a
nonzero difference. This completes the proof.
\end{proof}

We also use a refinement when the energy bound has a positive
power beyond $|A|^2$. Suppose that $0<\delta\le1$ and
\[
 \E(A,A)\le K|A|^{2+\delta}
 \qquad\text{for every }A\subseteq\Sigma.
\]
Then the same uniqueness conclusion holds under
\begin{equation}\label{eq:energy-power-recovery}
 |E|<\frac{c_\delta q^d}{K|\Sigma|^\delta}.
\end{equation}
{Indeed, let $f:\Sigma\to\mathbb C$ be nonzero. By homogeneity, assume that
$\norm{f}_{L^2(\Sigma,dc)}=1$.
For $k\ge0$, define}
\begin{align*}
 L_k&=\{\xi\in\Sigma:2^{-k-1}|\Sigma|^{-1/2}<|f(\xi)|\le2^{-k}|\Sigma|^{-1/2}\},\\
 H_k&=\{\xi\in\Sigma:2^k|\Sigma|^{-1/2}<|f(\xi)|\le2^{k+1}|\Sigma|^{-1/2}\}.
\end{align*}
We have $|L_k|\le |\Sigma|$ and $|H_k|\le |\Sigma|2^{-2k}$.
By orthogonality and the energy hypothesis, the corresponding
fourth norms are bounded by
$q^{d/4}K^{1/4}|\Sigma|^{\delta/4}2^{-k}$ and
$2q^{d/4}K^{1/4}|\Sigma|^{\delta/4}2^{-k\delta/2}$, respectively.
Summing the two geometric series gives
\begingroup
\[
\left\lVert\sum_{\xi\in\Sigma}f(\xi)\chi(x\cdot\xi) \right\rVert_{L^4(\F_q^d,dc)}
 \ll_\delta q^{d/4}K^{1/4}|\Sigma|^{\delta/4} \norm{f}_{L^2(\Sigma,dc)}.
\]
\endgroup
The Fourier inversion and support argument in the preceding proof
now gives~\eqref{eq:energy-power-recovery}.

The next proposition states the consequences of the energy bounds
of Koh--Pham--Vinh~\cite{KPV}, Koh--Pham~\cite{KP}, and
Shkredov~\cite{Shkredov} with the same uniqueness conclusion as
Theorem~\ref{thm:recovery}.

\begin{proposition}\label{prop:recovery-earlier-energy}
Let $q$ be an odd prime power, let $d=2h+1\ge5$, and let
$j\in\F_q^*$ satisfy $\eta_q((-1)^h j)=-1$.
Let $b\in\F_q^d$, let $\varnothing\ne\Sigma\subseteq b+S_j$, and
let $E\subseteq\F_q^d$. Set
\[
\tau=q^{(d-3)/2},\qquad a_d=2^d-d-1,
\]
and write
\[
 B_{q,d}(|\Sigma|)=\frac{|\Sigma|}{q}+q^{(d-2)/2}+
 \begin{cases}
 q^{(d-3)/4}|\Sigma|^{1/2},&d\equiv3\pmod4,\ q\equiv3\pmod4,\\
 0,&\text{otherwise}.
 \end{cases}
\]
There is a constant $c_d>0$, depending only on $d$, such that, if
\begin{equation}\label{eq:recovery-earlier-energy}
 |E|<c_d
 \max\left\{
 \frac{q^d}{B_{q,d}(|\Sigma|)\log^2(2|\Sigma|)},\
 \frac{q^d}{\tau^{1/a_d}|\Sigma|^{1-1/a_d}}
 \right\},
\end{equation}
then every signal supported on $E$ is uniquely determined among
all signals with at most $|E|$ nonzero entries by its Fourier
coefficients outside $\Sigma$.
\end{proposition}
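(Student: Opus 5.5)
The proposition is a union of two sufficient conditions: one from Lemma~\ref{lem:energy-recovery}, one from the power refinement \eqref{eq:energy-power-recovery}. Since the condition is a maximum, it suffices to show that each term alone forces uniqueness. As in the proof of Theorem~\ref{thm:recovery}, multiplying by $\chi(-b\cdot x)$ reduces to $b=0$: the modulation translates the Fourier support and preserves the spatial support. Additive energy is translation invariant, so after this reduction all energy hypotheses need only be checked for $A\subseteq\Sigma\subseteq S_j$.

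For the first term I will invoke the known energy bounds for subsets of $S_j$ in the negative sign class. In the cases of \cite{KPV} (all $d\equiv1\pmod4$, and $d\equiv3$ with $q\equiv1\pmod4$) the bound is $\E(A,A)\ll_d |A|^3/q+q^{(d-2)/2}|A|^2$. In the case $d\equiv3$, $q\equiv3\pmod4$, $j$ a square, \cite{KP} gives the extra term $q^{(d-3)/4}|A|^{5/2}$. For $A\subseteq\Sigma$ I bound $|A|\le|\Sigma|$ in the surplus factors, obtaining
\[
\E(A,A)\le C_dB_{q,d}(|\Sigma|)|A|^2 .
\]
Lemma~\ref{lem:energy-recovery} with $M=C_dB_{q,d}(|\Sigma|)\ge1$ then gives uniqueness whenever $|E|$ is below $c_dq^d/(B_{q,d}\log^2(2|\Sigma|))$. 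The main bookkeeping risk here is matching the exact form of the \cite{KP} energy estimate (its proof via point--hyperplane incidences) to the middle term of $B_{q,d}$. I would state precisely which form is quoted and, if it differs, adjust only the bound on $M$.

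For the second term I use the Shkredov-type energy bound from \cite{Shkredov}, for sets without large affine pieces. The relevant geometric input is Lemma~\ref{lem:geometry}: affine $h$-flats meet $S_j$ in at most $2\tau$ points. I expect the quoted inequality to read
\[
\E(A,A)\ll_d\tau^{1/a_d}|A|^{3-1/a_d},\qquad a_d=2^d-d-1,
\]
which arises by iterating a Balog--Szemer\'edi-type/higher-energy step through dimensions $d$. I would then apply \eqref{eq:energy-power-recovery} with $\delta=1-1/a_d\in(0,1]$ and $K=C_d\tau^{1/a_d}$; note $\delta>0$ because $a_d\ge26$ when $d\ge5$. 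This gives uniqueness when
\[
|E|<c\,q^d/(\tau^{1/a_d}|\Sigma|^{1-1/a_d}),
\]
and $c_\delta$ depends only on $d$.

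The main obstacle is the second term. One must verify that the hypotheses of the cited energy theorem are satisfied by subsets of $S_j$, with the dependence on the flat-intersection parameter $\tau$ exactly as $\tau^{1/a_d}$, uniformly in $q$ and its characteristic. If \cite{Shkredov} is phrased for sets avoiding large subgroups or subspaces in $\F_p^d$, I would check that the argument uses only the bound on intersections with affine $h$-flats in $S_j$. That bound is supplied, over every $\F_q$, by Lemma~\ref{lem:geometry} (and by the totally isotropic analysis in the proof of Proposition~\ref{prop:mixed}). Having checked this, taking $c_d$ as the minimum of the two constants completes the argument.
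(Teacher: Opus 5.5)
Your reduction to $b=0$ and your handling of the first term agree with the paper: the KPV/KP energy bounds, then $|A|\le|\Sigma|$ in the surplus factors, then Lemma~\ref{lem:energy-recovery}. The second term, however, has a genuine gap. You assert $\E(A,A)\ll_d\tau^{1/a_d}|A|^{3-1/a_d}$ without identifying what the cited result requires. Your plan to justify it through the flat-intersection bound $|F\cap S_j|\le2\tau$ checks the wrong hypothesis. The paper's route has two inputs. First, Shkredov's Lemma~1 with $k=d$ gives $\E(A,A)^{a_d}|A|^{-(3\cdot2^d-4d-4)}\le\norm{A}_{\mathcal U^d}$, a bound on the additive-cube count, not a Balog--Szemer\'edi iteration. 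Second, his Theorem~1, (4.5), for a variety of dimension $d-1$ gives $\norm{A}_{\mathcal U^d}\ll_d t(S_j)|A|^d$. Here $t(S_j)$ is the largest size of an additive subgroup $\Gamma$ having a coset $z+\Gamma\subseteq S_j$. Since $3a_d-1=3\cdot2^d-3d-4$, your claimed bound follows once $t(S_j)\le\tau$.

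Proving $t(S_j)\le\tau$ is the missing idea. When $q=p^r$, an additive subgroup of $\F_q^d$ is only an $\F_p$-subspace. The fact that $h$-flats meet $S_j$ in at most $2\tau$ points therefore does not bound $|\Gamma|$: a priori the $\F_q$-span of $\Gamma$ could have dimension larger than $h$, and then Lemma~\ref{lem:geometry} gives nothing. The paper closes this by polarization.
\begin{enumerate}[label=(\roman*)]
\item From $z\pm u\in S_j$ one gets $Q(u)=z\cdot u=0$ for every $u\in\Gamma$.
\item Then $2u\cdot v=Q(u+v)-Q(u)-Q(v)=0$ for all $u,v\in\Gamma$.
\item By bilinearity, the $\F_q$-span $W$ of $\Gamma$ is totally isotropic and lies in $z^\perp$, so $z+W\subseteq S_j$ is an affine subspace of the sphere.
\item The first assertion of Lemma~\ref{lem:geometry} (maximal flats in $S_j$ have dimension $h-1$) then gives $|\Gamma|\le|W|\le\tau$.
\end{enumerate}
The isotropy analysis you allude to is in the proof of Lemma~\ref{lem:geometry}, not Proposition~\ref{prop:mixed}. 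Two smaller omissions should also be stated. Shkredov's estimates hold for sufficiently large $q$. For the remaining bounded range of $q$, use $\E(A,A)\le|A|^3\le q^{d/a_d}|A|^{3-1/a_d}\ll_d|A|^{3-1/a_d}$ and absorb this into the constant.
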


\begin{proof}
Translation preserves additive energy, so we may assume that $b=0$.
In the cases $d\equiv1\pmod4$ and
$d\equiv3\pmod4$, $q\equiv1\pmod4$, Koh, Pham, and
Vinh~\cite{KPV} prove
\begingroup
\begin{equation}\label{eq:kpv-energy-comparison}
 \E(A,A)\ll_d\frac{|A|^3}{q}
                       +q^{(d-2)/2}|A|^2
 \qquad(A\subseteq S_j).
\end{equation}
\endgroup
Their statement for primitive radii extends to all nonsquare radii
by dilation. In the remaining case, Koh and Pham~\cite{KP} prove
\begingroup
\begin{equation}\label{eq:kp-energy-comparison}
 \E(A,A)\ll_d\frac{|A|^3}{q}
       +q^{(d-2)/2}|A|^2+q^{(d-3)/4}|A|^{5/2}.
\end{equation}
\endgroup
Thus, every $A\subseteq\Sigma$ satisfies
${\E(A,A)\ll_d B_{q,d}(|\Sigma|)|A|^2}$.
Lemma~\ref{lem:energy-recovery} gives the first condition
in~\eqref{eq:recovery-earlier-energy}.

{For the second condition, let $t(S_j)$ be the subgroup
parameter defined in~\cite[Section~4, (4.2)]{Shkredov}. Thus,
$t(S_j)$ is the largest cardinality of an algebraic subgroup of
the additive group $\F_q^d$ whose coset is contained in $S_j$.
We have $t(S_j)=\tau$.} Indeed, suppose that
$z+\Gamma\subseteq S_j$, where $\Gamma$ is an additive subgroup.
Since $z\pm u\in S_j$ for $u\in\Gamma$, we have
$Q(u)=z\cdot u=0$. Polarization then gives $u\cdot v=0$ for all
$u,v\in\Gamma$. The $\F_q$-span of $\Gamma$ is therefore totally
isotropic in $z^\perp$. By Lemma~\ref{lem:geometry}, its dimension
is at most $h-1$, so $|\Gamma|\le\tau$. Equality is attained by
an affine $(h-1)$-flat in the sphere.

{We apply Lemma~1 in Section~3 of~\cite[p.~328]{Shkredov}
with $k=d$, and equation~(4.5) of Theorem~1 in Section~4
of~\cite[p.~331]{Shkredov} with $\dim S_j=d-1$.
Since $t(S_j)=\tau$, these estimates give}
\begingroup
\[
 \E(A,A)^{a_d}|A|^{-(3\cdot2^d-4d-4)}
 \le\norm{A}_{\mathcal U^d}\ll_d\tau|A|^d
 \qquad(\varnothing\ne A\subseteq S_j).
\]
\endgroup
Here $\norm{A}_{\mathcal U^d}$ is the unnormalized Gowers quantity
counting additive $d$-dimensional cubes in $A$.
The estimates hold for sufficiently large $q$; the remaining field
orders are covered by increasing the constant depending on $d$.
Since $3a_d-1=3\cdot2^d-3d-4$, it follows that
\begingroup
\begin{equation}\label{eq:shkredov-energy-comparison}
 \E(A,A)\ll_d\tau^{1/a_d}|A|^{3-1/a_d}.
\end{equation}
\endgroup
Apply~\eqref{eq:energy-power-recovery} with
$\delta=1-1/a_d$ and $K=C_d\tau^{1/a_d}$ to obtain the second
condition. Taking the smaller of the two constants completes the
proof.
\end{proof}

\subsection{Comparison and sharpness}\label{subsec:recovery-comparison}

\begingroup
Throughout this subsection, write
\[
\tau=q^{(d-3)/2},\qquad a_d=2^d-d-1.
\]
\endgroup
We first give {an example} in every odd dimension $d\ge5$ where
Theorem~\ref{thm:recovery} improves the sufficient conditions
above. Suppose that $q$ is an odd square and $|\Sigma|=q^{(d-2)/2}$.
Then $j$ is a nonsquare and the Koh--Pham--Vinh case of
Proposition~\ref{prop:recovery-earlier-energy} applies.
\begingroup
The following table gives sufficient conditions for unique recovery
among signals with at most $|E|$ nonzero entries. Here $c_d>0$ is
chosen sufficiently small, depending only on $d$. The displayed
thresholds are not asserted to be optimal.
\begin{center}
\small
\renewcommand{\arraystretch}{1.35}
\begin{tabular}{lc}
\hline
Method & Sufficient condition on $|E|$\\
\hline
Classical uncertainty, \eqref{eq:recovery-classical}
 & $|E|<c_dq^{(d+2)/2}$\\
Previous extension estimate, \eqref{eq:recovery-previous}
 & $|E|<c_dq^{(d+2)/2}$\\
Stein--Tomas, \eqref{eq:recovery-stein-tomas}
 & $|E|<c_dq^{(d+1)/2}$\\
KPV energy through Lemma~\ref{lem:energy-recovery}
 & $|E|<c_dq^{(d+2)/2}/\log^2q$\\
Shkredov energy, \eqref{eq:energy-power-recovery}
 & $|E|<c_dq^{(d+2)/2+1/(2a_d)}$\\
Meshulam's uncertainty inequality
 & $|E|<c_dq^{(d+2)/2}$\\
Theorem~\ref{thm:recovery}
 & $|E|<c_dq^{(d+2)/2+1/(d+1)}$\\
\hline
\end{tabular}
\end{center}
\par\endgroup
The entries concerning energy follow by substitution into
\eqref{eq:recovery-earlier-energy}. Since
$1/(d+1)>1/(2a_d)$ for $d\ge5$, our theorem improves the power of
$q$ in each of these sufficient conditions.
This improvement does not depend on the logarithmic loss in the
Koh--Pham--Vinh energy condition.

The comparison with Meshulam~\cite[Theorem~1.2]{Meshulam} depends
on the arithmetic of $q$. If $\rho_-<\rho_+$ are consecutive
divisors of $q^d$ and $\rho_-\le |\Sigma|\le\rho_+$, his theorem gives
\begingroup
\begin{equation}\label{eq:meshulam-comparison}
 |\operatorname{supp}v|
 \ge\frac{q^d}{\rho_-\rho_+}(\rho_-+\rho_+-|\Sigma|)
 \quad\text{if }v\ne0,\quad
 \operatorname{supp}\widetilde v\subseteq\Sigma.
\end{equation}
\endgroup
The right-hand side decreases with $|\Sigma|$, so the assertion follows
even when the Fourier support is a proper subset of $\Sigma$.
Thus, uniqueness follows whenever $2|E|$ is smaller than the
right-hand side of~\eqref{eq:meshulam-comparison}.
For square $q$ and $|\Sigma|=q^{(d-2)/2}$, $|\Sigma|$ is a divisor of $q^d$,
and the right-hand side is $q^d/|\Sigma|=q^{(d+2)/2}$, as in the table.

\begingroup
More generally, let $q$ be an odd square and let $d\ge5$ be odd.
Suppose that $|\Sigma|\sim q^\omega$, where $\omega$ is fixed and
\[
 \frac{d-2}{2}<\omega<\frac{d-2}{2}+\frac1{d-1}.
\]
Theorem~\ref{thm:recovery} gives a sufficient threshold of order
\[
 q^{\,d-\omega+(2\omega-d+3)/(d+1)},
\]
whose exponent exceeds $(d+2)/2$.
The classical, previous extension, Stein--Tomas, and KPV
thresholds are $O(q^{(d+2)/2})$.
The same holds for Meshulam's threshold by monotonicity and its
value at $|\Sigma|=q^{(d-2)/2}$.
The Shkredov threshold has exponent
$d-\omega+(2\omega-d+3)/(2a_d)$, which is also smaller since
$2\omega-d+3>0$ and $1/(d+1)>1/(2a_d)$.
Thus, our threshold exceeds each of these earlier sufficient
thresholds by a positive power of $q$ throughout this range
of fixed exponents.
\par\endgroup

In contrast, let ${q}$ be prime and $d=2h+1\ge5$.
For ${q^{h-1}\le |\Sigma|\le q^h}$,~\eqref{eq:meshulam-comparison} becomes
\begingroup
\[
 |\operatorname{supp}v|\ge q^2(q^{h-1}+q^h-|\Sigma|).
\]
\endgroup
At ${|\Sigma|\sim q^{h-1/2}}$ this is comparable to ${q^{h+2}}$.
It therefore gives a stronger uniqueness condition than
Theorem~\ref{thm:recovery} in the corresponding example over prime
fields. More generally, for fixed $h-1<\omega<h$ and
${|\Sigma|\sim q^\omega}$, Meshulam's bound still permits
${|E|<c_dq^{h+2}}$, whereas~\eqref{eq:recovery-condition} permits a
smaller power. The square-field assumption in the comparison
above is therefore relevant.

For completeness, in dimension three there are more specific energy
estimates. Let ${q}$ be prime and ${\eta_q(-j)=-1}$.
Rudnev~\cite[Theorem~18, equation~(30)]{Rudnev} proves that
\begingroup
\[
 \E(A,A)\ll |A|k_0^2+|A|^{37/15}
 \qquad(A\subseteq S_j,\ |A|<q^{15/11}),
\]
\endgroup
where $k_0$ is the maximum number of points of $A$ on an isotropic
line. No such line is contained in $S_j$. Since the restriction of
$Q-j$ to an isotropic line is affine linear, it has at most one
zero, and hence $k_0\le1$. Applying~\eqref{eq:energy-power-recovery}
with $\delta=7/15$ gives the sufficient condition
\begingroup
\[
 |E|<\frac{c q^3}{|\Sigma|^{7/15}}
 \qquad\text{when }|\Sigma|<q^{15/11}.
\]
\endgroup
This also shows why the recovery condition in
Theorem~\ref{thm:recovery} is not asserted to be optimal in
dimension three.

For every odd prime power $q$ and odd dimension
$d=2h+1\ge3$, Theorem~\ref{thm:recovery} is sharp up to
a constant when $|\Sigma|=\tau=q^{h-1}$. To see this, let
$\Sigma=z+U\subseteq S_j$ be an affine $(h-1)$-flat supplied by
Lemma~\ref{lem:geometry}, and define $v$ by
${\widetilde v=\1_\Sigma}$. Fourier inversion gives
\[
 v(x)=q^{-d}|U|\chi(x\cdot z)\1_{U^\perp}(x),
 \qquad |U^\perp|=q^{h+2}.
\]
The character sum vanishes outside $U^\perp$ because the nonzero
$\F_q$-linear map $u\mapsto x\cdot u$ is onto $\F_q$.
Partition $U^\perp=E_1\sqcup E_2$ into two sets whose cardinalities
differ by one. The signals $g_1=v\1_{E_1}$ and $g_2=-v\1_{E_2}$
have the same observed Fourier coefficients, since $g_1-g_2=v$.
Each has at most $\lceil q^{h+2}/2\rceil$ nonzero entries.
Thus, uniform uniqueness fails at this size, while
\eqref{eq:recovery-condition} permits $|E|<\varepsilon_dq^{h+2}$.
The classical condition~\eqref{eq:recovery-classical} already gives
the same order at $|\Sigma|=\tau$.

More generally, the ratio of the size permitted by
\eqref{eq:recovery-condition} to the classical size $q^d/|\Sigma|$,
apart from constants, is
\[
 \left(\frac{|\Sigma|}{\tau}\right)^{2/(d+1)}.
\]
Thus, classical uncertainty gives the larger size when
$|\Sigma|<\tau$. At the other extreme, if $\Sigma=S_j$, our
condition permits $|E|<c_dq^2$, whereas Stein--Tomas permits
$|E|<c_dq^{(d+1)/2}$, which is larger for $d\ge5$.
The affine-flat example establishes sharpness at $|\Sigma|=\tau$;
it does not establish an optimal recovery threshold in the range
where Theorem~\ref{thm:recovery} improves the earlier conditions.

\endgroup

\end{document}